\documentclass{amsart}
\newcommand{\mt}[1]{\mathtt{#1}}

\newcommand{\mF}{\mathcal{F}}

\newcommand{\bn}{\bar{n}}
\newcommand{\bB}{\mathbb{B}}

\usepackage{multirow}
\usepackage{booktabs,caption}
\usepackage{amsmath}
\usepackage{amsthm}
\usepackage{graphicx}
\usepackage{color}
\usepackage{amsfonts}
\usepackage{amssymb}
\usepackage{mathrsfs}
\usepackage{amscd}
\usepackage{url}

\newcommand{\on}{\bar{n}}
\newcommand{\re}{\mathbb{R}}
\newcommand{\cpx}{\mathbb{C}}

\newcommand{\N}{\mathbb{N}}

\newcommand{\lmd}{\lambda}

\newcommand{\eps}{\epsilon}

\def\af{\alpha}
\def\bt{\beta}
\def\gm{\gamma}
\def\rank{\mbox{rank}}

\newcommand{\sig}{\sigma}
\newcommand{\Sig}{\Sigma}

\newcommand{\reff}[1]{(\ref{#1})}

\newcommand{\mc}[1]{\mathcal{#1}}

%\newcommand{\supp}[1]{\mathbox{supp}(#1)}

% new commands for math environments

\newcommand{\bdes}{\begin{description}}
\newcommand{\edes}{\end{description}}

\newcommand{\bal}{\begin{align}}
\newcommand{\eal}{\end{align}}

\newcommand{\bnum}{\begin{enumerate}}
\newcommand{\enum}{\end{enumerate}}

\newcommand{\bit}{\begin{itemize}}
\newcommand{\eit}{\end{itemize}}

\newcommand{\bea}{\begin{eqnarray}}
\newcommand{\eea}{\end{eqnarray}}
\newcommand{\be}{\begin{equation}}
\newcommand{\ee}{\end{equation}}

\newcommand{\baray}{\begin{array}}
\newcommand{\earay}{\end{array}}

\newcommand{\bsry}{\begin{subarray}}
\newcommand{\esry}{\end{subarray}}

\newcommand{\bca}{\begin{cases}}
\newcommand{\eca}{\end{cases}}

\newcommand{\bcen}{\begin{center}}
\newcommand{\ecen}{\end{center}}

\newcommand{\bbm}{\begin{bmatrix}}
\newcommand{\ebm}{\end{bmatrix}}

\newcommand{\bmx}{\begin{matrix}}
\newcommand{\emx}{\end{matrix}}

\newcommand{\bpm}{\begin{pmatrix}}
\newcommand{\epm}{\end{pmatrix}}

\newcommand{\btab}{\begin{tabular}}
\newcommand{\etab}{\end{tabular}}

\newtheorem{theorem}{Theorem}[section]

\newtheorem{lem}[theorem]{Lemma}

\newtheorem{cor}[theorem]{Corollary}

\theoremstyle{definition}

\newtheorem{exm}[theorem]{Example}
\newtheorem{alg}[theorem]{Algorithm}

% set up the counters in sections

\setcounter{equation}{0}
\setcounter{subsection}{0}

\begin{document}

\title[Low Rank Symmetric Tensor Approximations]
{Low Rank Symmetric Tensor Approximations}

\author{Jiawang Nie}
\address{
Department of Mathematics,  University of California San Diego,  9500
Gilman Drive,  La Jolla,  California 92093,  USA.
}
\email{njw@math.ucsd.edu}

\begin{abstract}
For a given symmetric tensor, we aim at finding a new one
whose symmetric rank is small and that is close to the given one.
There exist linear relations among the entries of low rank symmetric tensors.
Such linear relations can be expressed by polynomials,
which are called generating polynomials.
We propose a new approach for computing low rank approximations
by using generating polynomials.
First, we estimate a set of generating polynomials
that are approximately satisfied by the given tensor.
Second, we find approximate common zeros
of these polynomials. Third, we use these zeros to
construct low rank tensor approximations.
If the symmetric tensor to be approximated is sufficiently
close to a low rank one, we show that the computed
low rank approximations are quasi-optimal.
%
%This approach can also be applied to
%efficiently compute low rank decompositions for symmetric tensors,
%especially for large scale tensors.
%
\end{abstract}

\keywords{symmetric tensor, tensor rank, low rank approximation,
generating polynomial, tensor decomposition, least squares}

\subjclass[2010]{65F99, 15A69, 65K10}

\maketitle

\section{Introduction}

Let $m,n >0$ be integers. Denote by
$\mt{T}^m(\cpx^n)$ the space of $m$th order tensors
over the complex vector space $\cpx^n$.
Under the canonical basis of $\cpx^n$,
each $\mc{F} \in \mt{T}^m(\cpx^n)$ can be represented by
an array indexed by integer tuples $(i_1, \ldots, i_m)$ with
$1 \leq i_j \leq n \, (j=1,\ldots,m)$, i.e.,
\[
\mc{F} = (\mc{F}_{i_1 \ldots i_m})_{1 \leq i_1, \ldots,  i_m \leq n }.
\]
Tensors of order $m$ are called $m$-tensors.
When $m=3$ (resp., $4$), they are called cubic (resp., quartic) tensors.
The tensor $\mF$ is {\it symmetric} if
$\mF_{i_1 \ldots i_m}$ is invariant under all permutations of $(i_1, \ldots, i_m)$.
Denote by $\mt{S}^m(\cpx^n)$
the linear subspace of all symmetric tensors in $\mt{T}^m(\cpx^n)$.

For a vector $u \in \cpx^n$, its $m$th tensor power is
the outer product $u^{\otimes m} \in \mt{S}^m(\cpx^n)$ such that
it holds for all $1 \leq i_1 \ldots i_m \leq n$ that
\[
( u^{\otimes m} )_{i_1 \ldots i_m}
= u_{i_1} \cdots u_{i_m}.
\]
Tensors like $u^{\otimes m}$ are called rank-1 symmetric tensors.
For every $\mc{F} \in \mt{S}^m(\cpx^n)$, there exist vectors
$u_1, \ldots, u_r \in \cpx^n$ such that
\be \label{dcmp:rank-r}
\mc{F} = (u_1)^{\otimes m} + \cdots + (u_r)^{\otimes m}.
\ee
The smallest such $r$ is called the {\it symmetric rank} of $\mF$,
and is denoted as $\rank_S(\mF)$.
If $\rank_S(\mF) =r$, $\mF$ is called a rank-$r$ tensor and
\reff{dcmp:rank-r} is called a symmetric rank decomposition,
which is often called a {\it Waring decomposition} in the literature.
The rank of a generic symmetric tensor is given
by the Alexander-Hirschowitz formula \cite{AlxHirs95}.
We refer to \cite{CGLM08} for symmetric tensors and their symmetric ranks,
and refer to \cite{BalBer12,BerGimIda11,BCMT10,GPSTD,OedOtt13}
for symmetric tensor decompositions. On the other hand,
the general {\it rank} of $\mF$, denoted as $\rank(\mF)$,
is the smallest $k$ such that
\[
\mc{F} = \mF_1 + \cdots + \mF_k,
\]
where each $\mF_i \in \mt{T}^m(\cpx^n)$ is rank-$1$
but not necessarily symmetric.
In the literature, $\rank(\mF)$
is also called the candecomp-parafac (cp) rank of $\mF$.
Clearly, we always have
$\rank(\mF) \leq \rank_S(\mF).$
It is interesting to know whether or not $\rank(\mF) = \rank_S(\mF)$
for a symmetric tensor $\mF$.
Comon conjectured that they are equal \cite{CConj}.
Indeed, Friedland \cite{Fri15} proved this is true for some classes of tensors.
Throughout this paper, we only consider symmetric tensors,
and their symmetric ranks are just called ranks, for convenience.

Tensor decomposition is a fundamental question
in multilinear algebra. For nonsymmetric tensors,
there exist optimization based methods
for computing tensor decompositions. We refer to
Acar et al.~\cite{ADKM}, Comon et al.~\cite{CLdA09}, Hayashi et al.~\cite{HayHay},
Paatero~\cite{Paat}, Phan et al.~\cite{PTC13},
Sorber et al.~\cite{SBL13}, Tomasi and Bro~\cite{TomBro}.
These methods can be adapted to computing symmetric tensor decompositions.
A survey of tensor decomposition methods can be found in
Comon~\cite{Com00}, Kolda and Bader~\cite{KolBad09}.
For symmetric tensor decompositions, there exist methods
that are based on catalecticant matrices \cite{IaKa99},
Hankel matrices and flat extensions \cite{BCMT10},
optimization based methods~\cite{Kolda15},
tensor eigenvectors \cite{OedOtt13},
and generating polynomials~\cite{GPSTD}.
Moreover, the method in \cite{BCMT10} can be generalized 
to compute nonsymmetric tensor decompositions~\cite{BBCM13}.
Tensor decompositions have broad applications \cite{KolBad09}.
For symmetric tensors, Waring decompositions have
broad applications, for instance, in machine learning \cite{AGHKT}.
For more introductions about tensors, we refer to \cite{Land12,Lim13}.

In applications, people often need to approximate tensors by low rank ones.
A symmetric tensor in $\mt{S}^m(\cpx^n)$ has $n^m$ entries,
which grows rapidly as $n$ and/or $m$ increase.
%
%and the space $\mt{S}^m(\cpx^n)$ has dimension $\binom{n+m-1}{m}$.
%
This number is big, even for small $m$ and moderately large $n$.
For instance, when $n=100$ and $m=3$, a cubic tensor has one million entries.
Computations with tensors are often expensive, because of the typical huge dimension.
So, low rank approximations are often preferable
in applications~\cite{ComLim11,GKT13}.
A rank-1 tensor in $\mt{S}^m(\cpx^n)$ can be parameterized as $u^{\otimes m}$,
i.e., by an $n$-dimensional vector $u$,
which is significantly smaller than $n^m$.
Therefore, if we can find $u_1, \ldots, u_r$ such that
\be \label{lra:F=u1r}
\mc{F} \, \approx \, (u_1)^{\otimes m} + \cdots + (u_r)^{\otimes m},
\ee
then the computations with $\mc{F}$
can be approximately done with $u_1,\ldots,u_r$.
The low rank symmetric tensor approximation problem is the following:
for a given tensor $\mc{F} \in \mt{S}^m(\cpx^n)$
and a given rank $r$ (typically small),
find vectors $u_1, \ldots, u_r \in \cpx^n$
such that \reff{lra:F=u1r} is satisfied as much as possible.
This is equivalent to the nonlinear nonconvex optimization problem
\be \label{nLS:lra:symF}
\min_{ u_1,\ldots, u_r \in \cpx^n } \quad
\Big\| (u_1)^{\otimes m} + \cdots +(u_r)^{\otimes m}
- \mc{F} \Big\|^2,
\ee
where the Hilbert-Schmidt norm as in \reff{tensor:norm} is used.
The problem~\reff{nLS:lra:symF} is NP-hard~\cite{HiLi13},
even for the special case $r=1$.

In contrast to the matrix case (i.e., $m=2$), the best rank-$r$
tensor approximation may not exist when $m>2$ and $r>1$~\cite{DeSLim08}.
This is because the set of tensors of rank less than
or equal to $r$ may not be closed.
For $r=1$, there exists much work on rank-$1$ approximations;
see, e.g., \cite{LMV00b,HNZ17,SHOPM2002,NieWan13,ZhaGol01,ZLQ12}.
For almost all $\mF$, the best rank-$1$
approximation is unique \cite{FriOtt14}.
%
%Kofidis and Regalia \cite{SHOPM2002},
%Hu, Huang and Qi~\cite{HHQ13}, Zhang, Ling and Qi \cite{ZLQ12},
%Nie and Wang \cite{NieWan13}.
%
For $r>1$, when $\mF$ is a nonsymmetric tensor,
there exists various work on rank-$r$ approximations.
The classical methods \cite{ADKM,CLdA09,HayHay,Paat,PTC13,SBL13,TomBro}
%e.g., alternative least squares,
%and higher order power iterations \cite{???LMV00b},
are often used for nonsymmetric tensor decompositions and approximations.
We refer to \cite{ComLim11,FriTam14,GKT13}
for recent work on nonsymmetric low rank tensor approximations.
When $r>1$ and $\mF$ is symmetric,
there exists relatively few work on computing low rank approximations.
The classical methods for the nonsymmetric case
could be used by forcing the symmetry in the computation~\cite{SHOPM2002},
but their theoretical properties are not well-studied.

%
%De Lathauwer et al. \cite{LMV00b} investigated best
% rank-$(R_1,R_2,\ldots,R_N)$ approximations
%for higher order tensors. There are
%quasi-Newton type methods (cf.~\cite{SavLim10})
% and Jacobi rotation type methods (cf. \cite{IAvD13})
%for computing such approximations.
%

\bigskip \noindent
{\bf Contributions}\,
In this paper, we propose a new approach for computing
low rank approximations for symmetric tensors.
It is motivated by the existing linear relations
among the entries of low rank symmetric tensors.
Such linear relations can be expressed by polynomials,
which are called generating polynomials~\cite{GPSTD}.
In applications of low rank approximations,
the tensor to be approximated is often close to a low rank one.
This is typically the case because of measurement errors or noise,
which are often small.
This fact motivates us to compute low rank approximations
by finding the hidden linear relations that are
satisfied by low rank symmetric tensors.

Our method of computing low rank symmetric tensor approximations
consists of three major stages.
First, we estimate a set of generating polynomials,
which can be obtained by solving a linear least squares problem.
Second, we find approximate common zeros
of these generating polynomials;
it can be done by computing Schur decompositions and solving eigenvalue problems.
Third, we construct a low rank approximation from their common zeros,
by solving a linear least squares problem.
Our main conclusion is that if the tensor to be approximated is sufficiently close
to a low rank one, then the computed low rank tensors are good low rank approximations.
The proof is build on perturbation analysis of linear least squares
and Schur decompositions.
%
%The proposed methods
%can also be applied to compute low rank decompositions
%efficiently, especially for large scale tensors.
%

The paper is organized as follows.
In Section~\ref{sc:prlm}, we present some basics for symmetric tensors.
In Section~\ref{sc:lra:sym}, we give an algorithm for computing
low rank symmetric tensor approximations,
and then analyze its approximation quality in Section~\ref{sbsc:symerr}.
In Section~\ref{sc:comp}, we report numerical experiments.
Section~\ref{sc:confu} concludes the paper and lists some future work.

\section{Preliminaries}
\label{sc:prlm}
\setcounter{equation}{0}

\subsection{Notation}

The symbol $\N$ (resp., $\re$, $\cpx$) denotes the set of
nonnegative integers (resp., real, complex numbers).
For any $t\in \re$, $\lceil t\rceil$ (resp., $\lfloor t\rfloor$)
denotes the smallest integer not smaller
(resp., the largest integer not bigger) than $t$.
%For integer $n>0$, $[n]$ denotes the set $\{1,\ldots,n\}$.
%and $\supp{\af} = \{ 1\leq i\leq n:\, \af_i \ne 0\}$.
%For $\af,\bt \in \N^n$, denote $\af \leq  \bt$ if every $\af_i \leq \bt_i$.
%The symbol $\N_{\leq k}^n$ denotes the multi-index set $\{\af\in\N^n: |\af| \leq k \}$,
%%and $\N_{=k}^n$ denotes $\{\af\in\N^n: |\af| = k \}$.
%For $x \in \cpx^n$ and $\af \in \N^n$, $x^\af$
%denotes the monomial $x_1^{\af_1}\cdots x_n^{\af_n}$.
%%The symbol $[x]_d$ denotes the following vector of monomials
%\[
%[x]_d^T = [\, 1 \quad  x_1 \quad \cdots \quad x_n \quad x_1^2 \quad
%x_1x_2 \quad \cdots \cdots
%\quad x_1^d \quad x_1^{d-1}x_2 \quad \cdots \cdots \quad x_n^d \,],
%\]
%
%The symbol $\cpx[x] := \cpx[x_1,\ldots,x_n]$)
%denotes the ring of complex polynomials in $x:=(x_1,\ldots,x_n)$.
%The ring $\re[x] := \re[x_1,\ldots,x_n]$)
%is defined similarly for real polynomials.
%
The cardinality of a set $S$ is denoted as $|S|$.
%
%For a finite set $\mathbb{B}$ in $\cpx[x]$ and a vector $v \in \cpx^n$, denote
%\be \label{v:to:*m}
%[v]_\mathbb{B}  := \big( p(v) \big)_{p \in \mathbb{B}}, \quad
%v^{\circledast m} := (1, v)^{\otimes m}.
%\ee
%%
%The $[x^d]$ denotes the column vector of all monomials of degree $d$, i.e.,
%\[
%[x^d]^T  := [\, x_1^d \quad x_1^{d-1}x_2 \quad \cdots \cdots \quad x_n^d \,].
%\]
%%
%A homogeneous polynomial is called a form.
%The $\cpx[x]_d$ denotes the subspace of complex polynomials
%with degree $\leq d$.
%%
%For a finite set $S$, $|S|$ denotes its cardinality.
%For a general set $S \subseteq \re^n$,
%$int(S)$ denotes its interior, and $\pt S$ denotes its boundary in standard Euclidean topology.
%
For a complex matrix $A$, $A^T$ denotes its transpose,
$A^*$ denotes its conjugate transpose,
$\| A \|_2$ denotes its standard operator $2$-norm,
and $\|A\|_F$ denotes its standard Frobenius norm.
%
%For a symmetric matrix $X$, $X\succeq 0$ (resp., $X\succ 0$) means
%$X$ is positive semidefinite (resp. positive definite).
%
%For a general matrix $X$, $\sig_{max}(X)$ and $\sig_{min}(X)$
%denote the maximum and minimum singular values of $X$ respectively.
%
For a complex vector $u$, $\| u \| := \sqrt{u^*u}$
denotes the standard Euclidean norm,
and $(u)_i$ denotes its $i$th entry.
%%
%For a tensor $\mc{F} \in \mt{T}^m(\cpx^{n+1})$, its norm $\| \mc{F} \|$
%is defined as in \reff{tensor:norm}.
%%
%We define its max-norm as
%\be \label{maxnorm:F}
%\| \mc{F} \|_{max} := \max_{0 \leq i_1,\ldots, i_m \leq n  }
%\, | \mc{F}_{ i_1,\ldots, i_m } |.
%\ee
%
For two square matrices $X,Y$ of the same dimension,
denote their commutator
\be \label{df:[X,Y]}
 [X, Y] := XY-YX.
\ee
The Hilbert-Schmidt norm of a tensor $\mF \in \mt{T}^m(\cpx^n)$
is defined as:
\be \label{tensor:norm}
\|\mF\|  :=
\Big( \sum_{ 1\leq i_1 ,  \ldots,   i_m \leq  n}
| \mc{F}_{i_1 \ldots i_m} |^2 \Big)^{1/2}.
\ee

Symmetric tensors can be equivalently indexed by monomial powers.
For convenience, let
$\on \, := \, n-1.$
For $\af := (\af_1,\ldots,\af_{\on}) \in \N^{\on}$
and $x := (x_1,\ldots, x_{\on})$, denote
\[
|\af| := \af_1 + \cdots + \af_{\on},
\quad
x^\af := x_1^{\af_1} \cdots x_{\on}^{\af_{\on}}.
\]
Denote
$
\N_m^{\on} := \{ \af  \in \N^{\on} : |\af|   \leq m \}.
$
Let $\cpx[x]:=\cpx[x_1,\ldots,x_{\on}]$ be the ring of polynomials in $x$,
with complex coefficients.
Each $\mc{F} \in \mt{S}^m(\cpx^{n})$ is indexed by an integer tuple
$(i_1,\ldots, i_m)$, with $1\leq i_1, \ldots, i_m \leq n$.
We can equivalently
index $\mF$ by $\af \in \N_m^{\on}$ as (let $x_0:=1$)
\be \label{idx:F:af=i1toim}
\mc{F}_{\af} := \mc{F}_{i_1, \ldots, i_m}
\quad \mbox{ whenever} \quad
x^\af = x_{i_1-1} \cdots x_{i_m-1}.
\ee
Throughout the paper, we mostly use the above monomial power indexing.

\subsection{Catalecticant matrices}
\label{ssc:Cat}

The Catalecticant matrix\footnote{
There are several Catalecticant matrices for $\mF$,
as in the reference \cite{IaKa99}.
In this paper, we only use the most square one.}
\cite{IaKa99} of a tensor $\mF \in \mt{S}^m(\cpx^n)$
is defined as
\be \label{df:CatMat:F}
\mbox{Cat}(\mF) := (\mF_{\af+\bt})_{|\af|\leq m_1, |\bt|\leq m_2},
\ee
where $m_1 = \lfloor \frac{m}{2} \rfloor$, $m_2 = \lceil \frac{m}{2} \rceil$
and $\mF$ is indexed as in \reff{idx:F:af=i1toim}.
The matrix $\mbox{Cat}(\mF)$ is also called
the symmetric flattening of $\mF$ in some references.
For a rank $r$, let $\sig_r$ be the closure
in the Zariski topology \cite{CLO07} of the set of tensors
$(u_1)^{\otimes m}+\cdots+(u_r)^{\otimes m}$,
with $u_1, \ldots, u_r \in \cpx^n$.
The set $\sig_r$ is an irreducible variety~\cite[Chapter~5]{Land12}.
It is also called the $r$th order secant variety of the degree $m$
Veronese embedding of $\cpx^n$ in the literature.
The {\it symmetric border rank} of $\mF$ is then defined as
\be \label{df:bordrank}
\rank_{SB}(\mc{F}) = \min \left\{r: \, \mF \in \sig_r \right\}.
\ee
It always holds that \cite[Lemma~2.1]{GPSTD}
\be  \label{rk:Cat<=B<=S}
\rank\,\mbox{Cat}(\mF) \leq  \rank_{SB}(\mF) \leq  \rank_S(\mF).
\ee
A property {\tt P} is said to be {\it generically} true on
$\sig_r$ if it is true in a nonempty Zariski open subset $T$ of $\sig_r$
\cite[Remark~2.1]{OedOtt13}.
For such a property {\tt P}, we say that $w$
is a {\it generic} point for {\tt P} if $w \in T$.
Interestingly, the inequalities in \reff{rk:Cat<=B<=S} become equalities
if $\mF$ is a generic point of $\sig_r$ and $r$ does not exceed
the smaller size of  $\mbox{Cat}(\mF)$.
This fact was also noted in Iarrobino and Kanev
\cite[page XVIII]{IaKa99}. For convenience of reference,
we give a straightforward proof for this fact.

\begin{lem} \label{pro:rk:Cat=S}
Let
$s = \min\{ \binom{n+m_1-1}{m_1}, \binom{n+m_2-1}{m_2} \}$,
the smaller size of $\mbox{Cat}(\mF)$.
For all $r \leq s$,
if $\mF$ is a generic point of $\sig_r$, then
\be \label{r<s:rkS=Cat}
\rank\,\mbox{Cat}(\mF) = \rank_{SB}(\mF) = \rank_S(\mF) = r.
\ee
\end{lem}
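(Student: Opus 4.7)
The plan is to factor $\mbox{Cat}(\mF)$ for $\mF \in \sig_r$ via degree-$m_1$ and degree-$m_2$ Veronese vectors, exhibit one concrete tensor in $\sig_r$ whose Catalecticant attains rank $r$, and then promote this to a generic statement by Zariski openness together with a Chevalley-type density argument.

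First, for $u \in \cpx^n$ and $k \in \{m_1,m_2\}$ I would introduce the Veronese vector
\[
v_k(u) \in \cpx^{\binom{n+k-1}{k}}, \qquad
(v_k(u))_\alpha \,:=\, u_1^{\,k-|\alpha|}\, u_2^{\,\alpha_1}\cdots u_n^{\,\alpha_{\bn}},
\quad |\alpha| \le k.
\]
A direct computation from the monomial indexing~\reff{idx:F:af=i1toim} yields the rank-one factorization $\mbox{Cat}(u^{\otimes m}) = v_{m_1}(u)\, v_{m_2}(u)^T$, so by linearity any symmetric decomposition $\mF = \sum_{i=1}^r u_i^{\otimes m}$ produces
\[
\mbox{Cat}(\mF) \,=\, V_{m_1}\, V_{m_2}^T, \qquad V_k := [\,v_k(u_1)\,\,\cdots\,\,v_k(u_r)\,].
\]
Hence $\rank \mbox{Cat}(\mF)$ is controlled by the column ranks of $V_{m_1}$ and $V_{m_2}$.

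Second, I would argue that one can choose $u_1^*,\ldots,u_r^* \in \cpx^n$ for which both $V_{m_1}$ and $V_{m_2}$ have full column rank $r$, using only $r \le s$. The image of the Veronese map $u \mapsto v_k(u)$ linearly spans $\cpx^{\binom{n+k-1}{k}}$, since any identity $\sum_\alpha c_\alpha (v_k(u))_\alpha \equiv 0$ is a polynomial identity in the entries of $u$ that forces every $c_\alpha = 0$ by the linear independence of monomials. Choosing $u_i^*$ iteratively so that each new $v_k(u_i^*)$ escapes the span of its predecessors (possible for both $k=m_1,m_2$ whenever $r \le s$) yields an $\mF^* := \sum_{i=1}^r (u_i^*)^{\otimes m} \in \sig_r$ with $\rank \mbox{Cat}(\mF^*) = r$.

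Third, I would close with semi-continuity and density. Because $\mF \mapsto \mbox{Cat}(\mF)$ is linear and matrix rank is lower semi-continuous in the Zariski topology, $U_1 := \{\mF \in \sig_r : \rank \mbox{Cat}(\mF) \ge r\}$ is a nonempty Zariski-open subset of $\sig_r$. The bound $\rank \mbox{Cat}(\mF) \le \rank_{SB}(\mF) \le r$ holds everywhere on $\sig_r$ by~\reff{rk:Cat<=B<=S} and the definition of $\sig_r$, pinning $\rank \mbox{Cat}(\mF) = \rank_{SB}(\mF) = r$ throughout $U_1$. To upgrade $\rank_{SB} = r$ to $\rank_S = r$, I would invoke Chevalley's theorem: the polynomial map $\phi(u_1,\ldots,u_r) := \sum_i u_i^{\otimes m}$ has constructible image with closure $\sig_r$, so by irreducibility of $\sig_r$ this image contains a Zariski-open dense subset $U_2 \subseteq \sig_r$ on which $\rank_S(\mF) \le r$. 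Chained with~\reff{rk:Cat<=B<=S}, this forces all three ranks to equal $r$ on the nonempty Zariski-open set $U_1 \cap U_2$, which is exactly the statement of the lemma.

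The substantive content is concentrated in the middle step, where nondegeneracy of the Veronese map supplies the single example of a full-rank Catalecticant; every other ingredient (the rank-one factorization of $\mbox{Cat}(u^{\otimes m})$, lower semi-continuity of matrix rank, Chevalley constructibility, and irreducibility of $\sig_r$) is standard. Even the middle step reduces cleanly to the linear independence of monomials, so I do not anticipate any serious obstacle.
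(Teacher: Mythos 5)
Your proof is correct and follows the same overall strategy as the paper's: exhibit a nonempty Zariski-open subset of $\sig_r$ on which $\rank\,\mbox{Cat}(\cdot)\geq r$, and combine this with the chain \reff{rk:Cat<=B<=S} and the fact that the image of $(u_1,\ldots,u_r)\mapsto\sum_i u_i^{\otimes m}$ is dense and constructible, hence contains a Zariski-open subset of the irreducible variety $\sig_r$. The genuine difference lies in how the two arguments certify that the open set is nonempty. The paper works directly with the $r\times r$ minors $\phi_1,\ldots,\phi_k$ of the symbolic matrix $\mbox{Cat}\big(\sum_i (x^i)^{\otimes m}\big)$, defines $Y$ as the locus where some $\phi_j\neq 0$, and proceeds assuming $Y\neq\emptyset$, i.e., that not every $\phi_j$ vanishes identically; this existence step is left implicit. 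You fill it in explicitly via the rank-one factorization $\mbox{Cat}(u^{\otimes m})=v_{m_1}(u)v_{m_2}(u)^T$ and the nondegeneracy of the Veronese map (linear independence of monomials), producing a concrete $\mF^*\in\sig_r$ with $\rank\,\mbox{Cat}(\mF^*)=r$. Your version is therefore a bit more self-contained and arguably more transparent about why $r\leq s$ is the right hypothesis, while the paper's is more compact by packaging the same nonvanishing claim into the definition of the open set $Y$; otherwise the two proofs are substantively equivalent.
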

\begin{proof}
Let $\phi_1,\ldots, \phi_k$ be the $r\times r$ minors of the matrix
\[
\mbox{Cat} \Big( (x^1)^{\otimes m}+\cdots +(x^r)^{\otimes m} \Big)
\in \cpx^{ \binom{n+m_1-1}{m_1} \times \binom{n+m_2-1}{m_2} }.
\]
Denote $x:= (x^1,\ldots,x^r)$, with each $x^i \in \cpx^n$, then
$\phi_1,\ldots, \phi_k$ are homogeneous polynomials in $x$.
Let
\[
Z= \{x: \phi_1(x) =\cdots =\phi_k(x) = 0 \}.
\]
Let $Y:=(\cpx^n)^r\backslash Z$, a Zariski open set in $(\cpx^n)^r$.
Define the mapping:
\[
\pi:\, Y \to \sig_r, \quad (x^1,\ldots,x^r) \mapsto
(x^1)^{\otimes m}+\cdots +(x^r)^{\otimes m}.
\]
The image $\pi(Y)$ is dense in $\sig_r$.
So, $\pi(Y)$ contains a Zariski open subset, say, $\mathscr{Y}$, of $\sig_r$
\cite[Theorem~6,\S5,Chap.I]{Sha:BAG1}.
For each $\mF \in \mathscr{Y}$, we have
$\mF=(u_1)^{\otimes m}+\cdots +(u_r)^{\otimes m}$
for a tuple $u = (u_1,\ldots,u_r) \not\in Z$.
At least one of $\phi_1(u),\ldots,\phi_k(u)$ is nonzero,
so $\rank\,\mbox{Cat}(\mF) \geq r$.
By \reff{rk:Cat<=B<=S}, we have
$\rank\,\mbox{Cat}(\mF) \leq \rank_S(\mF) \leq r$,
which implies that \reff{r<s:rkS=Cat} is true.
\end{proof}

Lemma~\ref{pro:rk:Cat=S} implies that
if $r \leq s$, then for generic $\mF \in \sig_r$
we have $\rank_S(\mF)= \rank \, \mbox{Cat}(\mF) = r$.
So, in practice, $\rank_S(\mF)$ and $\rank_{SB}(\mF)$ can be estimated by
$\rank \, \mbox{Cat}(\mF)$ when they are smaller than or equal to $s$.
However, for generic $\mF \in \mt{S}^m( \cpx^n )$
such that $\rank \, \mbox{Cat}(\mF)=r$, we cannot conclude
$\mF \in \sig_r$.

\subsection{Generating polynomials}
\label{ssc:gp:sym}

This subsection mostly reviews the results in \cite{GPSTD}.
Let $\cpx[x]_m$ be the subset of polynomials
in $\cpx[x]$ whose degrees $\leq m$.
For $p = \sum_{\af\in\N_m^{\on} } p_\af x^\af \in \cpx[x]_m$
and $\mc{F} \in \mt{S}^m(\cpx^{n})$, define the product
\be \label{op:scrL:F}
\langle p, \mc{F} \rangle  := \sum_{\af\in\N_m^{\on} } p_\af \mc{F}_\af,
\ee
where $\mF$ is indexed as in \reff{idx:F:af=i1toim}
and each $p_\af$ is a coefficient.
Let $\deg(p)$ denote the total degree of a polynomial $p$.
As defined in \cite{GPSTD}, a polynomial $g \in \cpx[x]_m$
is called a {\em generating polynomial} for $\mF$ if
\be \label{df:GRpq}
\langle g \, x^\bt, \mc{F} \rangle  = 0 \quad
\forall\, \bt \in \N_m^{\bn}: \,
\deg(g) +  |\bt| \leq m.
\ee
The notion of generating polynomials is equivalent to
apolarity for symmetric tensors upon homogenization
\cite[Proposition~2.2]{GPSTD}.
Generating polynomials are useful for
computing symmetric tensor decompositions.
We consider generating polynomials for rank-$r$ symmetric tensors.
Let
\be \label{monls:grlex}
\mathbb{B}_0 := \Big\{
\underbrace{ 1, \, x_1, \, \ldots, \, x_{\bn}, \, x_1^2, \, x_1x_2,
\, \ldots }_{ \, \mbox{ the first $r$ } \, }
\Big\},
\ee
the set of first $r$ monomials
in the graded lexicographic ordering. Let
\be \label{mscrB12}
\mathbb{B}_1 := \big( \mathbb{B}_0 \cup x_1 \mathbb{B}_0
\cup \cdots \cup x_{\bn} \mathbb{B}_0)
\backslash \mathbb{B}_0.
\ee
The set $\mathbb{B}_1$ consists of the next $k$ monomials
in the graded lexicographic order,
where $k \leq rn$ but for which no closed expression is known.
For convenience, by writing $\bt \in \mathbb{B}_0$ (resp., $\af \in \mathbb{B}_1)$,
we mean that $x^\bt \in \mathbb{B}_0$ (resp., $x^\af \in \mathbb{B}_1)$.
Let $\cpx^{ \mathbb{B}_0 \times \mathbb{B}_1 }$ be the space of all complex matrices
indexed by $ (\bt, \af)  \in \mathbb{B}_0 \times \mathbb{B}_1$.
For $\af \in \mathbb{B}_1$ and $G \in \cpx^{ \mathbb{B}_0 \times \mathbb{B}_1 }$,
denote the polynomial in $x$
\be \label{vphi:W-af}
\varphi[G, \af](x) := \sum_{ \bt  \in \mathbb{B}_0 }
G(\bt,\af)  x^\bt - x^\af.
\ee
If all $\varphi[G, \af]$ ($\af \in \mathbb{B}_1$)
are generating polynomials,
then $G$ is called a {\it generating matrix} for $\mF$.
This requires that $\mF$ satisfies the linear relations
\cite[Prop.~3.5]{GPSTD}
\[
F_{\af + \gm } = \sum_{ \bt  \in \mathbb{B}_0 }  G(\bt,\af)  F_{\bt + \gm},
\]
for all $\gm \in \N^{\bn}$ with $|\gm| + |\af| \leq m$.
The above implies that the entire tensor $\mF$ can be determined
by the matrix $G$ and its first $r$ entries $\mF_\bt$
($\bt \in \bB_0$). This observation motivates the notion
of generating polynomials \cite[\S1.3]{GPSTD}. Denote
\be \label{vphi[W](x)}
\varphi[G](x) : = \big( \varphi[G, \af](x)\big)_{ \af \in \mathbb{B}_1 }.
\ee

The relations between generating polynomials and symmetric tensor decompositions
can be summarized as follows. Suppose $\mF$ has the decomposition
\be  \label{dcpA:u^m}
\mF = (u_1)^{\otimes m} + \cdots +  (u_r)^{\otimes m},
\ee
with $u_1,\ldots, u_r \in \cpx^n$.
Let $(u_i)_j$ be the $j$th entry of $u_i$.
If each $(u_i)_1 \ne 0$, let
\be \label{dhmg:ui:to:vi}
v_i = \big( (u_i)_2, \ldots, (u_i)_{\bn} \big)/ (u_i)_1, \quad
\lmd_i = \big( (u_i)_1 \big)^m.
\ee
Then, \reff{dcpA:u^m} is equivalent to the decomposition
\be \label{dcpA:lmd*v}
\mF = \lmd_1 \bbm 1 \\ v_1 \ebm^{\otimes m} + \cdots +
\lmd_r \bbm 1 \\ v_r \ebm^{\otimes m}.
\ee
The major part in computing \reff{dcpA:lmd*v}
is to determine $v_1,\ldots, v_r$. Once they are known,
the coefficients $\lmd_1, \ldots, \lmd_r$ can be
determined by solving linear equations.

\begin{theorem}  \label{thm:dstcV=>lmd}
(\cite{GPSTD})
Let $\mF \in \mt{S}^m(\cpx^n)$ be a symmetric tensor.
\bit

\item [(i)] If $G$ is a generating matrix of $\mF$ and $v_1,\ldots,v_r$
are distinct zeros of $\varphi[G](x)$,
then there exist scalars $\lmd_1,\ldots, \lmd_r$
satisfying \reff{dcpA:lmd*v}.

\item [(ii)] In \reff{dcpA:lmd*v}, if
$\det \big( [v_1]_{\mathbb{B}_0} \quad \cdots \quad
[v_r]_{\mathbb{B}_0}  \big) \ne 0$,
then there exists a unique generating matrix $G$ of $\mF$ such that
$v_1,\ldots,v_r$ are distinct zeros of $\varphi[G](x)$.
(The $[v_j]_{\mathbb{B}_0}$ denotes the vector
of monomials in $\bB_0$ evaluated at $v_j$.)
\eit
\end{theorem}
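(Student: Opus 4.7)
Both parts are short once one invokes a single bookkeeping identity together with the ``determination property'' built into the definition of a generating matrix. The identity: under the indexing~\reff{idx:F:af=i1toim}, the $\af$-entry of $\bbm 1\\v\ebm^{\otimes m}$ is exactly $v^\af$, so \reff{op:scrL:F} yields
\[
\Big\langle p,\,\bbm 1\\v\ebm^{\otimes m}\Big\rangle \;=\; p(v)
\quad\text{for all } p\in\cpx[x]_m,\;v\in\cpx^{\bn}.
\]
The determination property: if $G$ is a generating matrix of some $\mc{H}\in\mt{S}^m(\cpx^n)$, then $\mc{H}$ is uniquely reconstructed from its $\bB_0$-entries $(\mc{H}_\bt)_{\bt\in\bB_0}$, via the recursion $\mc{H}_{\af+\gm}=\sum_{\bt\in\bB_0}G(\bt,\af)\,\mc{H}_{\bt+\gm}$ displayed just after \reff{vphi:W-af}.

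\textbf{Part (i).} I would first observe that every rank-one tensor $\bbm 1\\v_j\ebm^{\otimes m}$ itself admits $G$ as a generating matrix: for each $\af\in\bB_1$ and each $\gm$ with $|\gm|+\deg\varphi[G,\af]\le m$,
\[
\Big\langle\varphi[G,\af]\,x^\gm,\;\bbm 1\\v_j\ebm^{\otimes m}\Big\rangle
\;=\; v_j^\gm\,\varphi[G,\af](v_j) \;=\; 0,
\]
since $v_j$ is a common zero of $\varphi[G]$. Hence any linear combination $\mc{G}:=\sum_j\lmd_j\bbm 1\\v_j\ebm^{\otimes m}$ also has $G$ as a generating matrix, and by the determination property it suffices to choose scalars so that the $\bB_0$-entries of $\mc{G}$ match those of $\mF$. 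Writing
\[
V:=\big([v_1]_{\bB_0}\ \cdots\ [v_r]_{\bB_0}\big)\in\cpx^{r\times r},\qquad f:=(\mF_\bt)_{\bt\in\bB_0},
\]
this amounts to solving $V\lmd=f$, after which $\mc{G}=\mF$.

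\textbf{The main obstacle} is the solvability of $V\lmd=f$; I would argue that $V$ is in fact invertible. Any $c\in\cpx^r$ with $Vc=0$ produces a tensor $\sum_jc_j\bbm 1\\v_j\ebm^{\otimes m}$ that has $G$ as a generating matrix and vanishing $\bB_0$-entries, and so must be the zero tensor by the determination property. Distinctness of the $v_j$, combined with the ``leading monomial'' structure of $\bB_0$ (the first $r$ graded-lex monomials, while the trailing terms of the $\varphi[G,\af]$ exhaust $\bB_1$), would then force $c=0$, since $\bB_0$ supplies enough polynomial functions to separate the $r$ distinct points. Making this separation rigorous — i.e., verifying that $\bB_0$ descends to a spanning set on the coordinate ring of $\{v_1,\ldots,v_r\}$ as a zero-dimensional subscheme cut out by $\varphi[G]$ — is the technical heart of the argument, and is where I expect to spend the bulk of the work.

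\textbf{Part (ii).} Here invertibility of $V$ is hypothesized, so both existence and uniqueness are immediate linear algebra. For every $\af\in\bB_1$, the vanishing conditions $\varphi[G,\af](v_j)=0$ $(j=1,\ldots,r)$ form the linear system
\[
V^{\!\top}\,G(\cdot,\af)\;=\;\big(v_1^\af,\ldots,v_r^\af\big)^{\!\top},
\]
which has a unique solution $G(\cdot,\af)\in\cpx^r$; stacking these columns defines $G$ uniquely. To verify that this $G$ is a generating matrix of $\mF$, I substitute the decomposition \reff{dcpA:lmd*v} and apply the opening identity:
\[
\langle\varphi[G,\af]\,x^\gm,\mF\rangle
\;=\;\sum_j\lmd_j\,v_j^\gm\,\varphi[G,\af](v_j)\;=\;0
\]
for every admissible $\gm$, which is exactly the defining condition \reff{df:GRpq}. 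Uniqueness of $G$ is already encoded in the unique solvability of the linear systems, completing part (ii).
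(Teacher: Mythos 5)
The manuscript does not contain a proof of this theorem; it is imported verbatim from the reference \cite{GPSTD}, so there is no ``paper's own proof'' to compare against. Judged on its own terms, your outline is sound in structure: the opening identity $\langle p,\, (1,v)^{\otimes m}\rangle = p(v)$ is correct and is the right engine for both parts, part (ii) is essentially a complete proof, and the reduction of part (i) to the invertibility of the Vandermonde-like matrix $V=\big([v_1]_{\bB_0}\,\cdots\,[v_r]_{\bB_0}\big)$ is the right move.

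The one genuine gap is exactly the one you flag: you do not actually establish that $V$ is invertible, and the route you sketch for it is mildly circular. You deduce $\sum_j c_j (1,v_j)^{\otimes m}=0$ from the determination property and then want distinctness of the $v_j$ to give $c=0$, but linear independence of $\{(1,v_j)^{\otimes m}\}_j$ for distinct $v_j$ is not automatic --- $r$ distinct points on a Veronese can certainly be linearly dependent when $r$ is large --- and justifying it for \emph{these particular} $v_j$ ultimately comes back to the invertibility of $V$. The clean way to close the gap is to use the companion-matrix structure directly: for a common zero $v$ of $\varphi[G]$ one has the left-eigenvector relation $[v]_{\bB_0}^{\!\top} M_{x_i}(G) = v_i\,[v]_{\bB_0}^{\!\top}$ for every $i$, hence $[v_j]_{\bB_0}^{\!\top} L(\xi) = (\xi\cdot v_j)\,[v_j]_{\bB_0}^{\!\top}$ for any $\xi$. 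Because the $v_j$ are distinct, a generic $\xi$ makes the scalars $\xi\cdot v_1,\ldots,\xi\cdot v_r$ pairwise distinct, so the vectors $[v_1]_{\bB_0},\ldots,[v_r]_{\bB_0}$ are eigenvectors of $L(\xi)^{\!\top}$ belonging to distinct eigenvalues and are therefore linearly independent; since there are $r=|\bB_0|$ of them, $V$ is invertible. Substituting this lemma into your argument makes part (i) complete. With that patch, the rest of your write-up is correct, including the verification in part (ii) that the $G$ solved from $V^{\!\top}G(\cdot,\af)=(v_1^\af,\ldots,v_r^\af)^{\!\top}$ is a generating matrix of $\mF$.
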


One wonders when $\varphi[G]$ has $r$ common zeros (counting multiplicities).
This question can be answered by using companion matrices.
For each $i=1,\ldots, \on$, the companion matrix for $\varphi[G](x)$
with respect to $x_i$ is $M_{x_i}(G)$, which is indexed by
$(\mu,\nu) \in \mathbb{B}_0 \times \mathbb{B}_0$
and is given as
\be \label{df:Mxi(W)}
\Big( M_{x_i}(G) \Big)_{\mu, \nu} =
\bca
1  &  \text{ if } x_i \cdot x^\nu \in \mathbb{B}_0 \text{ and }  \mu = \nu + e_i, \\
0  &  \text{ if } x_i \cdot x^\nu \in \mathbb{B}_0 \text{ and } \mu \ne \nu + e_i, \\
G(\mu, \nu+e_i)  &  \text{ if } x_i \cdot x^\nu \in \mathbb{B}_1.
\eca
\ee
Then, $\varphi[G]$ has $r$ common zeros (counting multiplicities)
if and only if the companion matrices
$M_{x_1}(G)$, $\ldots$, $M_{x_{\on}}(G)$ commute
\cite[Prop.~2.4]{GPSTD}, i.e.,
\be \label{MiMj=MjMi}
[M_{x_i}(G), \, M_{x_j}(G)] \, = \, 0 \quad
(1 \leq i < j \leq \on).
\ee

\subsection{Other related work on Waring decompositions}

There exist other methods for computing Waring decompositions,
which are related to apolarity and catalecticant matrices.
For binary symmetric tensors,
Sylvester's algorithm can be used to compute Waring decompositions.
For higher dimensional tensors,
the catalecticant matrix method is often used,
which often assumes the tensor rank is small \cite{IaKa99}.
Oeding and Ottaviani~\cite{OedOtt13}
proposed tensor eigenvector methods, which used
Koszul flattening and vector bundles.
We also refer to \cite{BalBer12,BerGimIda11,Kolda15}
for related work on symmetric tensor decompositions.

Brachat et al.~\cite{BCMT10} generalized Sylvester's algorithm
to higher dimensional tensors,
using properties of Hankel (and truncated Hankel) operators.
The method is to extend a given tensor $\mF \in \mt{S}^m(\cpx^n)$
to a higher order one
$\widetilde{\mF} \in \mt{S}^k(\cpx^n)$, with $k \geq m$.
The new tensor entries are treated as unknowns.
It requires to compute new entries of $\widetilde{\mF}$
such that the ideal, corresponding to the null space of
the catalecticant matrix $\mbox{Cat}(\widetilde{\mF})$,
is zero-dimensional and radical.
Mathematically, this is equivalent to solving a set of equations,
which we refer to Algorithm~7.1 of \cite{BCMT10}.
%%%%%%%%%%%%%%%%%%%%%
\iffalse
Theoretically, this method can find Waring decompositions for all tensors.
However, it is often very expensive to be implemented computationally.
One reason is that $\widetilde{\mF}$
has a large number of unknown entries,
which need to determined by solving some rational equations.
\fi
%%%%%%%%%%%%%%%%%%%%%%%%

The methods in \cite{BCMT10} and \cite{GPSTD} are both
related to apolarity. They have similarities,
but they are also computationally different.
To see this, we consider the tensor
$\mF \in \mt{S}^3(\cpx^3)$ in Example~5.1 of \cite{GPSTD},
whose entries $\mF_{00}$, $\mF_{10}$, $\mF_{01}$,
$\mF_{20}$, $\mF_{11}$, $\mF_{02}$,
$\mF_{30}$, $\mF_{21}$, $\mF_{12}$, $\mF_{03}$
(labelled by monomial powers as in \reff{idx:F:af=i1toim}) are respectively
$-8,2,15,-7,17$,$7,17,4,3,18$.
%\[
%\baray{rrrrrrrrrr}
%\mF_{00} & \mF_{10} & \mF_{01} &
%\mF_{20} & \mF_{11} & \mF_{02} &
%\mF_{30} & \mF_{21} & \mF_{12} & \mF_{03} \\
%-8 & 2 & 15 & -7 & 17 & 7 & 17 & 4 & 3 & 18
%\earay
%\]
It has rank $r=4$.
To apply the method in \cite{BCMT10} with $r=4$,
one needs to extend $\mF$ to $\widetilde{\mF} \in \mt{S}^4(\cpx^3)$
with new entries $\mF_{ij}$\,($i+j > 3$).
Their values need to be computed by solving
the equation $C_1C_2-C_2C_1=0$ where
\[
C_1 =
\bbm
2 & -7 & 17  & 17 \\
-7 & 17 & 4  & \widetilde{\mF}_{40}  \\
17 & 4 & 3  & \widetilde{\mF}_{31} \\
17 & \widetilde{\mF}_{40} & \widetilde{\mF}_{31}  & \widetilde{\mF}_{50} \\
\ebm
%\bbm
%\mF_{10} & \mF_{20} & \mF_{11}  & \mF_{30} \\
%\mF_{20} & \mF_{30} & \mF_{21}  & \mF_{40}  \\
%\mF_{11} & \mF_{21} & \mF_{12}  & \mF_{31} \\
%\mF_{30} & \mF_{40} & \mF_{31}  & \mF_{50} \\
%\ebm
\bbm
-8  & 2 & 15  & -7 \\
2 & -7 & 17  & 17  \\
15 & 17 & 7  & 4\\
-7 & 17 & 4  & \widetilde{\mF}_{40} \\
\ebm^{-1},
%\bbm
%\mF_{00} & \mF_{10} & \mF_{01}  & \mF_{20} \\
%\mF_{10} & \mF_{20} & \mF_{11}  & \mF_{30}  \\
%\mF_{01} & \mF_{11} & \mF_{02}  & \mF_{21} \\
%\mF_{20} & \mF_{30} & \mF_{21}  & \mF_{40} \\
%\ebm
\]
\[
C_2 =
\bbm
15 & 17 & 7  & 4 \\
17 & 4 & 3  & \widetilde{\mF}_{31}  \\
7 & 3 & 18  & \widetilde{\mF}_{22} \\
4 & \widetilde{\mF}_{31} & \widetilde{\mF}_{22}  & \widetilde{\mF}_{41} \\
\ebm
%\bbm
%\mF_{01} & \mF_{11} & \mF_{02}  & \mF_{21} \\
%\mF_{11} & \mF_{21} & \mF_{12}  & \mF_{31}  \\
%\mF_{02} & \mF_{12} & \mF_{03}  & \mF_{22} \\
%\mF_{21} & \mF_{31} & \mF_{22}  & \mF_{41} \\
%\ebm
\bbm
-8  & 2 & 15  & -7 \\
2 & -7 & 17  & 17  \\
15 & 17 & 7  & 4\\
-7 & 17 & 4  & \widetilde{\mF}_{40} \\
\ebm^{-1}.
%\bbm
%\mF_{00} & \mF_{10} & \mF_{01}  & \mF_{20} \\
%\mF_{10} & \mF_{20} & \mF_{11}  & \mF_{30}  \\
%\mF_{01} & \mF_{11} & \mF_{02}  & \mF_{21} \\
%\mF_{20} & \mF_{30} & \mF_{21}  & \mF_{40} \\
%\ebm
\]
To apply the method in \cite{GPSTD} with $r=4$,
one needs to determine the generating polynomials,
which can be parameterized as follows:
%%%%%%%%%%%%%%
\iffalse

clear all
syms a1 a2 a3 b1 b2 b3  c1 c2 c3;
tdeqn =[...
-8 == a1^3+b1^3+c1^3, ...
2 == a1^2*a2+b1^2*b2+c1^2*c2, ...
15 == a1^2*a3+b1^2*b3+c1^2*c3, ...
-7 == a1*a2^2+b1*b2^2+c1*c2^2, ...
17 == a1*a2*a3+b1*b2*b3+c1*c2*c3, ...
7 == a1*a3^2+b1*b3^2+c1*c3^2, ...
17 == a2^3+b2^3+c2^3, ...
4 == a2^2*a3+b2^2*b3+c2^2*c3, ...
3 == a2*a3^2+b2*b3^2+c2*c3^2, ...
18 == a3^3+b3^3+c3^3];
Sol = solve( tdeqn );
double(Sol.a1),

clear all

syms omg_1 omg_2 omg_3 omg_4 omg_5 omg_6 omg_7  omg_8;

AF11 = [-8 2  15  -7; 2  -7   17  17; 15  17  7  4 ];
bF11 = [17 ; 4 ; 3];
g11 = inv([ AF11 ; 1 0 0 0 ])*([bF11; omg_1]),

AF02 = [-8 2  15  -7; 2  -7   17  17; 15  17  7  4 ];
bF02 = [7; 3; 18];
g02 = inv([ AF02 ; 1 0 0 0 ])*([bF02; omg_2]),

AF30 = [-8  2  15  -7];
bF30 = [17];
g30 = inv([ AF30 ; 1 0 0 0; 0 1 0 0; 0 0 1 0 ])*([bF30; omg_3; omg_4 ; omg_5]),

AF21 = [-8  2  15  -7];
bF21 = [4];
g21 = inv([ AF21 ; 1 0 0 0; 0 1 0 0; 0 0 1 0 ])*([bF21; omg_6; omg_7; omg_8]),

\fi
%%%%%%%%%%%%%%%%%%%%
\[
\omega_1 + \text{\tiny $\frac{ -5800 \omega_1 - 181}{7019}$ } x_1 +
\text{\tiny  $\frac{ 2057 \omega_1  + 5942}{7019}$ }  x_2
+  \text{\tiny $\frac{ -5271 \omega_1  - 4365}{7019}$ } x_1^2  - x_1x_2,
\]
\[
\omega_2 +  \text{\tiny $\frac{6048 - 5800 \omega_2 }{7019}$ } x_1 +
\text{\tiny $\frac{ 2057 \omega_2  + 2870 }{7019}$  } x_2 +
\text{\tiny $\frac{ 859 - 5271 \omega_2}{7019}$  } x_1^2  - x_2^2,
\]
\[
\omega_3 +   \omega_4 x_1 +  \omega_5  x_2 +
\text{\tiny $\frac{ 2 \omega_4 - 8 \omega_3 + 15 \omega_5 - 17}{7}$ } x_1^2  - x_1^3,
\]
\[
\omega_6 +   \omega_7 x_1 +  \omega_8  x_2 +
\text{\tiny $\frac{ 2 \omega_7 - 8 \omega_6 + 15 \omega_8 - 4}{7}$  } x_1^2  - x_1^2x_2.
\]
The parameters $\omega_1, \omega_2, \ldots, \omega_8$
can be determined by solving the equation $M_1M_2-M_2M_1=0$, where
\[
M_1 =
\bbm
0  &  0  &  \omega_1   &   \omega_3 \\
1  &  0  &  \text{\tiny $\frac{ -5800 \omega_1 - 181}{7019}$  }  &  \omega_4  \\
0  &  0  &  \text{\tiny $\frac{ 2057 \omega_1  + 5942}{7019}$  }  &  \omega_5  \\
0  &  1  &  \text{\tiny $\frac{ -5271 \omega_1  - 4365}{7019}$  }
           & \text{\tiny $\frac{ 2 \omega_4 - 8 \omega_3 + 15 \omega_5 - 17}{7} $ }
\ebm,
\]
\[
M_2 =
\bbm
0 & \omega_1 &  \omega_2  & \omega_6 \\
0 & \text{\tiny $\frac{ -5800 \omega_1 - 181}{7019}$ }
            & \text{\tiny $\frac{6048 - 5800 \omega_2 }{7019}$ } & \omega_7 \\
1 & \text{\tiny $\frac{ 2057 \omega_1  + 5942}{7019}$  }
          & \text{\tiny $\frac{ 2057 \omega_2  + 2870 }{7019}$  }  & \omega_8 \\
0 & \text{\tiny $\frac{ -5271 \omega_1  - 4365}{7019}$ }
         & \text{\tiny $\frac{ 859 - 5271 \omega_2}{7019}$   }
        &  \text{\tiny $\frac{ 2 \omega_7 - 8 \omega_6 + 15 \omega_8 - 4}{7}$  }
\ebm.
\]
Two of the parameters $\omega_1,\ldots, \omega_8$ can be eliminated
by adding two generic linear equations.
We refer to \S4.1 of \cite{GPSTD} for more details about the above.

For generic tensors of certain ranks, the Waring decomposition is unique.
In applications, the uniqueness justifies that
the computed decomposition is what people wanted.
Galuppi and Mella \cite{GalMel} showed that
for a generic $\mF \in \mt{S}^m(\cpx^n)$,
the Waring decomposition is unique if and only if
$(n,m,r) = (2,2k-1,2k)$, $(4,3,5)$ or $(3,5,7)$.
When $\mF \in \mt{S}^m(\cpx^n)$ is a generic tensor of a subgeneric rank $r$
(i.e., $r$ is smaller than the value of the Alexander-Hirschowitz formula) and $m\geq 3$,
Chiantini, Ottaviani and Vannieuwenhoven~\cite{ChOtVan15}
showed that the Waring decomposition is unique,
with only three exceptions:
$(n,m,r) = (2,6,9)$, $(3,4,8)$ or $(5,3,9)$.
In all of these three exceptions,
there are exactly two Waring decompositions.

\section{Low rank approximations}
\label{sc:lra:sym}
\setcounter{equation}{0}

For a symmetric tensor that has rank $r$,
Theorem~\ref{thm:dstcV=>lmd} can be used to
compute its Waring decomposition, as in \cite{GPSTD}.
The same approach can be extended to compute low rank approximations.
Given $\mc{F} \in \mt{S}^m(\cpx^n)$,
we are looking for a good approximation
$\mc{X} \in \mt{S}^m(\cpx^n)$ of $\mF$
such that $\rank_S(\mc{X}) \leq r$.
Note that $\rank_S(\mc{X}) \leq r$ if and only if
there exist vectors $u_1, \ldots, u_r \in \cpx^n$ such that
\[
\mc{X} = (u_1)^{\otimes m} + \cdots + (u_r)^{\otimes m}.
\]
Finding the best rank-$r$ approximation is equivalent to
solving the nonlinear nonconvex optimization problem~\reff{nLS:lra:symF}.
%
%\be \label{apx:LS:u}
%\min_{ u_1, \ldots, u_r \in \cpx^n } \quad
%\left \|  (u_1)^{\otimes m} + \cdots
%+ (u_r)^{\otimes m} - \mF \right \|^2.
%\ee
%

Clearly, if $\rank_S(\mF)=r$, the best rank-$r$ approximation
is given by the rank decomposition of $\mF$.
When $\mF$ is close to the set of rank-$r$ tensors,
the best rank-$r$ approximation is given by the rank decomposition of
a rank-$r$ tensor that is close to $\mF$.
Decompositions of rank-$r$ tensors can be computed
by using generating polynomials as in~\cite{GPSTD}.
This motivates us to compute low rank approximations
by using generating polynomials. Let
\[ \bn := n-1. \]
For $u \in \cpx^n $ with $(u)_1 \ne 0$, we can write it as
\[
u = \lmd^{1/m} (1, v), \quad \lmd \in \cpx, \, v \in \cpx^{\on}.
\]
Then, $u^{\otimes m}  = \lmd  (1,v)^{\otimes m}$, and
\reff{nLS:lra:symF} can be reformulated as
\be \label{apx:LS:lmd*v}
\min_{
\substack{v_1, \ldots, v_r \in \cpx^{\on} \\ \lmd_1, \ldots, \lmd_r \in \cpx  }
} \quad
\| \lmd_1 (1, v_1)^{\otimes m} + \cdots + \lmd_r (1, v_r)^{\otimes m}
- \mF \|^2.
\ee
We propose to solve \reff{apx:LS:lmd*v} in three major stages:
\bit

\item [1)] Find a matrix $G$ such that the polynomials
$\varphi[G,\af]$ as in \reff{vphi:W-af}
approximate generating polynomials for $\mc{F}$
as much as possible.

\item [2)] Compute vectors $v_1,\ldots,v_r$
that are approximately common zeros
of the generating polynomials $\varphi[G,\af](x)$.

\item [3)] Determine $\lmd_1,\ldots,\lmd_r$
and construct low rank approximations.

\eit

\subsection{Estimate generating polynomials}
\label{ssc:aprx:gp}

For a given rank $r$, let $\mathbb{B}_0,\mathbb{B}_1$
be the monomial sets as in \reff{monls:grlex}-\reff{mscrB12}.
%
%By writing $\af \in \mathbb{B}_1$ we mean that $x^\af \in \mathbb{B}_1$.
%We index the tensor $\mF$ by monomial powers as in \reff{idx:F:af=i1toim}.
%
For a matrix $G \in \cpx^{\mathbb{B}_0 \times \mathbb{B}_1}$
and $\af \in \mathbb{B}_1$, the $\varphi[G,\af]$
as in \reff{vphi:W-af} is a generating polynomial for $\mc{F}$
if and only if
\be \label{<gmWaf:F>=0}
\sum_{\bt \in \mathbb{B}_0}  G(\bt, \af) \mF_{\bt + \gm} = \mF_{\af + \gm}
%\Big \langle x^\gm \varphi[G, \af], \mc{F} \Big\rangle = 0
\quad \big( \forall\, \gm \in \N_{m-|\af|}^{\on} \big),
\ee
which is implied by \reff{df:GRpq}.
A matrix $G$ satisfying \reff{<gmWaf:F>=0} for all $\af \in \mathbb{B}_1$
generally exists if $\rank_S(\mF) \leq r$,
but it might not if $\rank_S(\mF) > r$.
However, we can always find the linear least squares solution
of \reff{<gmWaf:F>=0}.
%%%%%%%%%%%%%%%%%%%%%%%%%%%%%%%%%%%%%%%%
\iffalse

\be \label{ls:rc:Waf}
\min_{ G \in \cpx^{\mathbb{B}_0 \times \mathbb{B}_1} } \quad
\sum_{ \af \in \mathbb{B}_1 }  \sum_{ \gm \in  \N_{m-|\af|}^{\on} }
\Big| \big
\langle x^\gm \varphi[G,\af], \mc{F}
\big\rangle \Big|^2.
\ee

\fi
%%%%%%%%%%%%%%%%%%%%%%%%%%%%%%%%%%%%%%%%%%%%%
Indeed, for each $\af \in \bB_1$,
let the matrix $A[\mc{F},\af]$ and the vector $b[\mc{F},\af]$ be such that
\be \label{df:Ab[F,af]}
\left\{ \baray{lcl}
A[\mc{F},\af]_{\gm , \bt} &=& \mc{F}_{\bt+\gm}, \quad
\forall \,  (\gm, \bt) \in \N_{m-|\af|}^{\on} \times \mathbb{B}_0, \\
b[\mc{F},\af]_{\gm} &=& \mc{F}_{\af+\gm}, \quad
\forall \,  \gm \in \N_{m-|\af|}^{\on}.
\earay\right.
\ee
The dimension of $A[\mc{F},\af]$ is
$\binom{\on+m-|\af|}{m-|\af|} \times r$,
and the length of $b[\mc{F},\af]$ is
$\binom{\on+m-|\af|}{m-|\af|}$.
Let $G(:,\af)$ denote the $\af$th column of $G$,
then \reff{<gmWaf:F>=0} is equivalent to
\[
A[\mc{F},\af] G(:, \af) = b[\mc{F},\af].
\]
Consider the linear least squares problem
\be \label{ls:Aw=b}
\min_{ G \in \cpx^{\mathbb{B}_0 \times \mathbb{B}_1} } \quad
\sum_{ \af \in \mathbb{B}_1 }
\Big\| A[\mc{F},\af] \, G(:,\af) - b[\mc{F},\af] \Big\|^2.
\ee
Each summand in \reff{ls:Aw=b} involves a different column of $G$.
So, \reff{ls:Aw=b} can be decoupled into a set of
smaller linear least squares problems
(for each $\af \in \mathbb{B}_1$):
\be \label{ls:Aw=b:af}
\min_{ g \in \cpx^{\mathbb{B}_0 } } \quad
\Big\| A[\mc{F},\af] \, g - b[\mc{F},\af] \Big\|^2.
\ee

When $r \leq \binom{\on+m-|\af|}{m-|\af|}$, for generic $\mF$,
it is expected that $A[\mc{F},\af]$ has linear independent columns.
For such a case, \reff{ls:Aw=b:af} has a unique least squares solution.
When $r > \binom{\on+m-|\af|}{m-|\af|}$,
the least squares solution to \reff{ls:Aw=b:af} is not unique,
but it can be linearly parameterized.
Let $N_\af$ be a basis matrix for the null space of $A[\mc{F},\af]$,
if it exists. The optimal solution of \reff{ls:Aw=b:af}
can be parameterized as
\be \label{para:g(omg:af)}
g_{\af}(\omega_\af)  := g_{\af}^{ls}  + N_\af \omega_\af,
\ee
with $g_{\af}^{ls}$ the minimum norm solution to
\reff{ls:Aw=b:af} and $\omega_\af$ a parameter.
When $A[\mc{F},\af]$ has full column rank,
$N_\af$ and $\omega_\af$ do not exist. Let
\be \label{omg:all}
\omega = [\omega_\af]_{\af \in \mathbb{B}_1 }
\ee
be the vector of all parameters.
The optimal solution to \reff{ls:Aw=b} is the matrix
\be \label{G(omg):par}
G (\omega) \, := \, [g_{\af}(\omega_\af)]_{ \af \in \mathbb{B}_1 }
\in \cpx^{ \mathbb{B}_0 \times   \mathbb{B}_1 }.
\ee
%
%The $\af$th column of $G(\omega) $ is $g_{\af}(\omega_\af)$.
%
Our goal is to find $\omega$ such that
$\varphi[G(\omega)](x)$, defined in \reff{vphi[W](x)},
has (or is close to have) $r$ common zeros.
By Proposition~2.4 of \cite{GPSTD},
this requires that the companion matrices in \reff{df:Mxi(W)}
\[
M_{x_1}( G (\omega) ), \ldots,  M_{x_{\on}}( G (\omega) )
\]
are as commutative as possible.
So, we propose to compute $\omega$ by solving the optimization problem
\be \label{nls:omg:cmmu}
\min_{ \omega }  \quad \sum_{ 1 \leq i < j \leq \on }
\Big \| \Big [M_{x_i} ( G (\omega) ), \,\,
M_{x_j}( G (\omega) ) \Big]  \Big\|_F^2.
\ee
Let $\hat{\omega}$ be an optimizer of \reff{nls:omg:cmmu} and
\be \label{Gls}
G^{ls} := G ( \hat{\omega} ).
\ee

\subsection{Approximate common zeros}
\label{ssc:acom:zr}

%
%As shown in \cite{GPSTD}, if $\mF$ is a generic rank-$r$
%symmetric tensor, then the polynomial system
%\be \label{vpiWLS(x)=0}
%\varphi[G^{ls}, \af](x) = 0 \, \, (\af \in \mathbb{B}_1)
%\ee
%has $r$ distinct common solutions.
%
%When $\mF$ is close to $\sig_r^{m,n}$,
%the set of tensors in $\mt{S}^m(\cpx^n)$ whose symmetric
%border ranks $\leq r$ (cf.~\S\ref{sbsc:symtsr}),
%

Recall that $\varphi[G](x)$ is defined as in \reff{vphi[W](x)}.
By Proposition~2.4 of \cite{GPSTD}, the polynomial system
\be \label{vpiWLS(x)=0}
\varphi[G^{ls}](x) = 0
\ee
has $r$ complex solutions (counting multiplicities) if and only if
\[
[M_{x_i}(G^{ls}),    M_{x_j}(G^{ls}) ] = 0
\quad (0 \leq i,j \leq \bn).
\]
If no solutions are repeated, they can be computed as follows. Denote
\be \label{M(xiWls)}
L(\xi) := \xi_1 M_{x_1}(G^{ls}) +
\cdots +  \xi_{\on} M_{x_{\on}}(G^{ls}),
\ee
where $\xi :=(\xi_1, \ldots, \xi_{\on}) \in \cpx^{\bn}$ is generically chosen.
Compute the Schur decomposition
\[
Q^* L( \xi ) Q \, = \, T,
\]
where $Q=[q_1 \, \ldots \, q_r ]$ is unitary and
$T$ is upper triangular. Then, it is well-known that the vectors
\[
\left(q_i^* M_{x_1}(G^{ls})q_i, \, \, \ldots, \, \,
q_i^* M_{x_{\on}}(G^{ls}) q_i \right) \quad
(i=1,\ldots, r)
\]
are the solutions to \reff{vpiWLS(x)=0}.
We refer to \cite{CGT97} for the details.

When $\mF$ is not a rank-$r$ tensor, the equation
\reff{vpiWLS(x)=0} typically does not have a solution.
However, when $\mF$ is close to $\sig_r$,
\reff{vpiWLS(x)=0} is expected to have $r$ common approximate solutions.
In such a case, what is the best choice for $\xi$?
We want to choose $\xi$ such that $L(\xi)$
maximally commutes with each $M_{x_i}(G^{ls})$.
This leads to the quadratic optimization problem
\be \label{best:xi:Mxi}
\underset{\xi \in \cpx^{\on}, \| \xi \|_2 = 1 }{\min}
\quad \sum \limits_{i=1}^{\on}
\Big\|  \Big[ M_{x_i}(G^{ls}) , \,\, L(\xi) \Big] \Big \|_F^2 .
\ee
It can be solved as an eigenvalue problem. Write, for all $i$,
\[
\big[ M_{x_i}(G^{ls}), \,\, L(\xi) \big] =
\sum_{j=1}^{\on} \xi_j M^{i,j}.
\]
Note that $M^{i,j}$ is just the commutator $[M_{x_i}(G^{ls}), M_{x_j}(G^{ls})]$.
Let $vec(M^{i,j})$
denote the vector consisting of the columns of $M^{i,j}$ and
\[
V_i = \bbm vec(M^{i,1}) & \cdots & vec(M^{i,\on}) \ebm.
\]
Then,
\[
\sum_{i=1}^{\on}
\left \| \Big[ M_{x_i}(G^{ls}), \,\, L(\xi) \Big]  \right \|_F^2 =
\xi^* \Big( \sum\limits_{i=1}^{\on} V_i^* V_i \Big) \xi.
\]
An optimizer $\hat{\xi}$ of \reff{best:xi:Mxi}
is an eigenvector (normalized to have unit length),
associated to the smallest eigenvalue of the Hermitian matrix
\be \label{hatV}
\widehat{V} \, := \, V_1^* V_1 + \cdots + V_{\on}^* V_{\on}.
\ee
As before, we compute the Schur decomposition
\be  \label{Schur:MWls}
\hat{Q}^* L( \hat{\xi} ) \hat{Q} \, = \, \hat{T},
\ee
where $\hat{Q}=[\hat{q}_1 \, \ldots \, \hat{q}_r ]$ is unitary and
$\hat{T}$ is upper triangular.
For $i=1,\ldots,r$, let
\be \label{hatv:ls:aprx}
v_i^{ls} :=
\left(\hat{q}_i^* M_{x_1}(G^{ls})\hat{q}_i, \, \, \ldots, \, \,
\hat{q}_i^* M_{x_{\on}}(G^{ls}) \hat{q}_i \right).
\ee
The vectors $v_1^{ls}, \ldots, v_r^{ls}$ can be used as approximate
solutions to \reff{vpiWLS(x)=0}.

\subsection{Construction of low rank tensors}
\label{ssc:con:lrt}

Once $v_1^{ls}, \ldots, v_r^{ls}$ are computed,
we can construct a low rank approximation
by solving the linear least squares problem
\be  \label{LS:vls*lmd=F}
\min_{ \substack{ (\lmd_1,\ldots,\lmd_r) \in \cpx^r } } \quad
\| \lmd_1 (1, v_1^{ls}) ^{\otimes m} + \cdots +
\lmd_r (1, v_r^{ls})^{\otimes m} -\mc{F} \|^2.
\ee
Let $\lmd^{ls}:=(\lmd_1^{ls}, \ldots, \lmd_r^{ls})$
be an optimal solution to \reff{LS:vls*lmd=F}.
For $i=1,\ldots,r$, let
\[
u_i^{ls} := \sqrt[m]{\lmd_i^{ls}} (1, v_i^{ls}).
\]
Then, we get the low rank tensor
\be \label{Xls:sym}
\mc{X}^{gp} := (u_1^{ls})^{\otimes m}+ \cdots + (u_r^{ls})^{\otimes m}.
\ee

%%%%%%%%%%%%%%%%%%%%
%%%% the following is dropped
%%%%%%%%%%%%%%%%%%%%%%%%%%%%%%%%

\iffalse

For the basic case $r=1$, we can get an explicit formula for
the approximating tensor $\mc{X}^{gp}$.
For such case, $\mathbb{B}_0=\{1\}$, $\mathbb{B}_1= \{x_1, \ldots, x_n\}$,
and each $A[\mc{F}, e_i]$ is the vector
\[
a_i \, := \, (\mc{F}_\gm)_{ \gm \in \N_{m-1}^{\on} }.
\]
For $i=1,\ldots,\on$, let
\be \label{r=1:fml:taui}
\theta_i \, := \, a_i^*b[\mc{F}, e_i] / a_i^*a_i.
\ee
Then, $v^{ls} \, = \, (1, \theta_1, \ldots, \theta_{\on})$ and (let $\theta_0=1$)
\be \label{r=1:fml:lmdls}
\lmd^{ls} =
\Big( \sum_{ 1 \leq i_1, \ldots, i_m \leq n}
\mc{F}_{i_1,\ldots,i_m}  \theta_{i_1-1}^* \cdots \theta_{i_m-1}^*
\Big)/ \| (v^{ls})^{\otimes m} \|^2.
\ee
So, $\mc{X}^{gp} = \lmd^{ls} (v^{ls})^{\otimes m}$
can be given by a closed formula.

\fi
%%%%%%%%%%%%%%%%%%%%%%%%%%%%%%%%%%%%%%%%%%%%%%%%%%%

%
%\subsection{An algorithm for low rank approximations}
%\label{sc:lrap:alg}
%

When $\mF$ is of rank $r$, the tensor $\mc{X}^{gp}$
is equal to $\mF$. When $\mF$ is close to a rank-$r$ tensor,
$\mc{X}^{gp}$ is expected to be a good rank-$r$ approximation.
We will show in the next section that if
$\mF$ is sufficiently close to $\sig_r$,
then $\mc{X}^{gp}$ is a quasi-optimal rank-$r$ approximation, i.e.,
its error is at most a constant multiple of the optimal error.
Moreover, we can always improve it
by solving the optimization problem \reff{nLS:lra:symF}.
Generally, this can be done efficiently, because
$\mc{X}^{gp}$ is a good approximation.

\subsection{A numerical algorithm}

Combining the above, we propose the following algorithm
for computing low rank approximations for symmetric tensors.

\begin{alg} \label{alg:tsr:aprox}
(Low rank symmetric tensor approximations.) \\
For a given tensor $\mc{F} \in \mt{S}^m(\cpx^n)$ and a rank $r$,
do the following:
\bit

\item [Step 1] Solve the linear least squares problem~\reff{ls:Aw=b}
and express its optimal solution as in \reff{G(omg):par}.
If the parameter $\omega$ exists, use a suitable numerical optimization method
to solve \reff{nls:omg:cmmu} and get $G^{ls}$ as in \reff{Gls}.

\item [Step 2] Let $\hat{\xi}$ be a unit length eigenvector
of $\widehat{V}$ corresponding to its smallest eigenvalue.
Compute the Schur decomposition \reff{Schur:MWls}.

\item [Step 3] Compute $v_1^{ls},\ldots, v_r^{ls}$ as in
\reff{hatv:ls:aprx}, and solve \reff{LS:vls*lmd=F} for
$(\lmd_1^{ls}, \ldots, \lmd_r^{ls})$.

\item [Step 4] Construct the tensor $\mc{X}^{gp}$ as in \reff{Xls:sym}.

\item [Step 5] Use a suitable numerical optimization method
to solve \reff{nLS:lra:symF} for
an improved solution $(u_1^{opt},\ldots, u_r^{opt})$,
with the starting point $(u_1^{ls},\ldots, u_r^{ls})$.
Output the tensor
\be \label{Xgr:sym}
\mc{X}^{opt} := (u_1^{opt})^{\otimes m}+ \cdots + (u_r^{opt})^{\otimes m}.
\ee

\eit

\end{alg}

The complexity of Algorithm~\ref{alg:tsr:aprox}
can be estimated as follows.
The linear least squares problem~\reff{ls:Aw=b}
consists of $|\bB_1|$ subproblems \reff{ls:Aw=b:af}.
Solving \reff{ls:Aw=b:af} requires the storage of a
$\binom{\bn+m-|\af|}{m-|\af|} \times r$ matrix
and $O(n^{m-|\af|}r^2)$ floating point operations (flops).
There are $|\bB_1| = O(nr)$ such subproblems in total.
Computing $\hat{\xi}$ requires a unit eigenvector
corresponding to the minimum eigenvalue of $\widehat{V}$,
which requires the storage of a $n \times n$
Hermitian matrix and essentially takes $O(n^3)$ flops.
The computation in \reff{hatv:ls:aprx} takes $O(nr^3)$ flops.
The Schur decomposition \reff{Schur:MWls} requires
the storage of a $r\times r$ matrix and essentially $O(r^3)$ flops.
The linear least squares \reff{LS:vls*lmd=F} requires
the storage of a $n^m \times r$ matrix and $O(n^{m}r^2)$ flops.
When the parameter $\omega$ in \reff{omg:all} exists,
the nonlinear least squares problem~\reff{nls:omg:cmmu} needs to be solved.
After $\mc{X}^{gp}$ is obtained,
the nonlinear least squares problem~\reff{nLS:lra:symF}
needs to be solved. The major difficult parts of Algorithm~\ref{alg:tsr:aprox}
are to solve \reff{nls:omg:cmmu} in the Step~1 and \reff{nLS:lra:symF} in the Step~5.
The cost for solving them depends on the choice of starting points.
Generally, it is hard to estimate the complexity
for solving nonlinear least squares problems.
We refer to \cite{yyx11}
for a survey of methods for solving nonlinear least squares.
Classical algorithms for solving linear least squares,
symmetric eigenvalue problems and Schur decompositions
are numerically stable. We refer to \cite{Demmel}
for complexity and stability issues in numerical linear algebra.

\subsection{Some remarks}
\label{ssc:rks}

\quad

%\begin{remark} \label{est:rk:cat}
\bigskip \noindent
{\bf The choice of rank $r$} \,
In practice, the rank $r$ is often not known in advance.
Theoretically, it can be very hard to get the best value of $r$.
However, we can estimate it from the Catalecticant matrix $\mbox{Cat}(\mF)$.
If $\mF = \mc{X} + \mc{E}$, then
\[
\mbox{Cat}(\mF) = \mbox{Cat}(\mc{X}) + \mbox{Cat}(\mc{E}).
\]
If $\mc{X}$ is a generic point of $\sig_r$ and $r \leq s$
(the smaller size of $\mbox{Cat}(\mF)$),
then $\rank_S \,(\mc{X}) = \rank\,\mbox{Cat}(\mc{X}) =r$,
by Lemma~\ref{pro:rk:Cat=S}. Hence,
$\rank_S \,(\mc{X})$ can be estimated by $\rank \,\mbox{Cat}(\mc{X})$.
When $\mc{E}$ is small, $\rank\,\mbox{Cat}(\mc{X})$
can be estimated by the numerical rank of $\mbox{Cat}(\mF)$ as follows:
compute the singular values of $\mbox{Cat}(\mF)$, say,
$\eta_1 \geq \eta_2 \geq \cdots \geq \eta_s \geq 0$.
If $\eta_r$ is significantly bigger than $\eta_{r+1}$, then
such $r$ is a good estimate for $\rank_S (\mc{X})$.
The border rank can also be estimated in the same way.
Evaluating the rank of a matrix numerically is a classical problem
in numerical linear algebra. We refer to the book \cite{Demmel}.
%\end{remark}

\bigskip \noindent
{\bf The case of real tensors} \,
When $\mF$ is a real symmetric tensor, i.e., $\mF \in \mt{S}^m(\re^n)$,
Algorithm~\ref{alg:tsr:aprox} can still be applied.
However, the computed low rank tensor
$\mc{X}^{gp}$ (and hence $\mc{X}^{opt}$)
may not be real any more.
This is because, in the Step~2, the Schur decomposition~\reff{Schur:MWls}
is over the complex field, even if the matrix
$L(\hat{\xi})$ is real. The reason is that
the eigenvalues of a real matrix are often not real.
For instance, the tensors in
Examples~\ref{exmp-sin} and \ref{exmp-rootsum}
are real, but the low rank approximating tensors
produced by Algorithm~\ref{alg:tsr:aprox} are not real.

\bigskip \noindent
{\bf About uniqueness} \,
When the least squares \reff{ls:Aw=b} has a unique solution,
i.e., the parameter $\omega$ does not exist,
the tensor $\mc{X}^{gp}$ in the Step~4 is uniquely
determined by $\mF$. However, the tensor $\mc{X}^{opt}$
might not be unique. This is because the optimization problem~\reff{nLS:lra:symF}
might have more than one minimizer, or it does not have
a minimizer and $\mc{X}^{opt}$ is only approximately optimal,
which is then not unique.
When \reff{ls:Aw=b} does not have a unique least squares solution,
the tensor $\mc{X}^{gp}$ is not unique
if \reff{nls:omg:cmmu} does not have a unique minimizer.
Consequently, $\mc{X}^{opt}$ might also not be unique.
On the other hand, if both \reff{nls:omg:cmmu} and \reff{nLS:lra:symF}
have unique optimizers, the approximating tensors
$\mc{X}^{gp}$ and $\mc{X}^{opt}$ are unique.

\section{Approximation error analysis}
\label{sbsc:symerr}
\setcounter{equation}{0}

We analyze the approximation quality of
the low rank tensors $\mc{X}^{gp}, \mc{X}^{opt}$
produced by Algorithm~\ref{alg:tsr:aprox}. Suppose
\be \label{dcmp:Xbs}
\mc{X}^{bs} := (u_1^{bs})^{\otimes m} + \cdots + (u_r^{bs})^{\otimes m}
\ee
is the best rank-$r$ approximation for $\mc{F}$
%(or an almost best one if the best does not exist).
(if the best one does not exist, suppose
$\mc{X}^{bs}$ is sufficiently close to be best).
Let
\be \label{F=Xbs+E}
\mc{E} =  \mF - \mc{X}^{bs}
\quad \mbox{ and } \quad \eps = \| \mc{E} \|.
\ee
Let $A[\mc{F},\af],\,b[\mc{F},\af]$ be as in \reff{df:Ab[F,af]}.
They are linear in $\mF$. So, for all $\af \in \mathbb{B}_1$,
\be \label{Ab:F=Xls+E}
\left\{ \baray{rcr}
A[\mc{F},\af] &=& A[\mc{X}^{bs},\af] + A[\mc{E},\af], \\
b[\mc{F},\af] &=& b[\mc{X}^{bs},\af] + b[\mc{E},\af].
\earay \right.
\ee
Suppose all $(u_i^{bs})_1 \ne 0$.
Then we can scale them as
\[
u_i^{bs} = (\lmd_i^{bs})^{1/m} (1, v_i^{bs}),
\quad \lmd_i^{bs} \in \cpx, \quad v_i^{bs} \in \cpx^{\on}.
\]
%
%For convenience of notation, denote
%\[
%\overline{ v_i^{bs} } \, := \,
%\bbm (v_i^{bs})_1 & \cdots & (v_i^{bs})_{\bn} \ebm^T.
%\]
%
The tuple $(u_1^{bs},\ldots, u_r^{bs})$
is called {\it scaling-optimal} for $\mF$
if $(\lmd_1^{bs},\ldots, \lmd_r^{bs})$ is an optimizer of
the linear least squares problem
\be \label{opt-scal:vbs}
\min_{ (\lmd_1, \ldots, \lmd_r) \in \cpx^r } \quad
\| \lmd_1 (1,v_1^{bs})^{\otimes m} + \cdots +
\lmd_r (1,v_r^{bs})^{\otimes m} -\mc{F} \|^2.
\ee
When $\mc{X}^{bs}$ is the best rank-$r$ approximation,
the tuple $(u_1^{bs},\ldots, u_r^{bs})$ must be scaling-optimal.
So, it is reasonable to assume that $(u_1^{bs},\ldots, u_r^{bs})$
is scaling-optimal. Otherwise, we can always
replace it by a scaling-optimal one,
for the purpose of our approximation analysis.

Denote by $[ v ]_{\mathbb{B}_0}$
the vector of monomials in $\mathbb{B}_0$
evaluated at the point $v \in \cpx^{\bn}$.
The approximation quality of the low rank tensors
$\mc{X}^{gp},\mc{X}^{opt}$ produced by Algorithm~\ref{alg:tsr:aprox}
can be estimated as follows.

\begin{theorem} \label{thm:lrkapx:err}
Let $\hat{\xi}$ and $\mc{X}^{gp}$ be produced by Algorithm~\ref{alg:tsr:aprox},
and $\mc{F}, \mc{X}^{bs}, \mc{E}, u_i^{bs}, v_i^{bs}, \lmd_i^{bs}$ be as above.
Assume the conditions:
\bit

\item [i)] the vectors $[v_1^{bs}]_{\mathbb{B}_0},\ldots,
[v_r^{bs}]_{\mathbb{B}_0}$ are linearly independent;

\item [ii)] each matrix $A[\mF,\af] \,(\af \in \mathbb{B}_1)$ has full column rank;

\item [iii)] the tuple $(u_1^{bs},\ldots, u_r^{bs})$ is scaling-optimal for $\mF$;

\item [iv)] the matrix $L(\hat{\xi}) := \sum_{i=1}^{\bn}
\hat{\xi}_i M_{x_i} (G^{ls})$
%%given in \reff{M(xiWls)},
does not have a repeated eigenvalue.

\eit
If $\eps = \| \mc{E} \|$ is small enough, then
\be \label{err:F-Xrc:bs}
\| \mc{X}^{bs} - \mc{X}^{gp} \| = O(\eps) \, \mbox{ and } \,
 \| \mc{F} - \mc{X}^{opt} \| \leq \| \mc{F} - \mc{X}^{gp} \| = O(\eps),
\ee
where the constants in the above $O(\cdot)$ only depend on $\mF$.
\end{theorem}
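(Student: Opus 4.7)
The strategy is a perturbation analysis around the clean case $\eps=0$. When $\mc{F}=\mc{X}^{bs}$, assumption (i) together with Theorem~\ref{thm:dstcV=>lmd}(ii) guarantees that there is a unique generating matrix $G^{bs}$ for $\mc{X}^{bs}$ whose associated polynomial system $\varphi[G^{bs}](x)=0$ has $v_1^{bs},\ldots,v_r^{bs}$ as its common zeros; in particular the companion matrices $M_{x_i}(G^{bs})$ pairwise commute, so by \reff{MiMj=MjMi} the nonlinear least squares \reff{nls:omg:cmmu} attains value $0$ at $G^{bs}$. Under assumption (ii) the linear system \reff{ls:Aw=b:af} has a unique solution, so the parameter $\omega$ in \reff{omg:all} does not exist, and Step~1 of Algorithm~\ref{alg:tsr:aprox} reduces to the linear least squares \reff{ls:Aw=b}. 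Thus the whole proof reduces to propagating an $O(\eps)$ bound through three well-posed numerical primitives: a linear least squares, a Schur decomposition, and another linear least squares.

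I would carry this out in four steps. First, using \reff{Ab:F=Xls+E} and the standard perturbation bound for linear least squares with a full-column-rank coefficient matrix (applied to each column $\af\in\mathbb{B}_1$), I would show $\|G^{ls}-G^{bs}\|_F=O(\eps)$, with the implicit constant controlled by the smallest singular values of $A[\mc{F},\af]$ and by $\|\mc{F}\|$. Consequently, each companion matrix satisfies $M_{x_i}(G^{ls})=M_{x_i}(G^{bs})+O(\eps)$, and therefore the Hermitian matrix $\widehat V$ defined in \reff{hatV} satisfies $\widehat V=\widehat V^{bs}+O(\eps)$, where $\widehat V^{bs}=0$. Second, because $\hat\xi$ is chosen as a unit eigenvector for the smallest eigenvalue of $\widehat V$, any limit point of $\hat\xi$ as $\eps\to 0$ is a unit vector $\xi^{bs}$; by assumption~(iv) we may take $\xi^{bs}$ so that $L^{bs}(\xi^{bs}):=\sum_i\xi^{bs}_i M_{x_i}(G^{bs})$ has simple spectrum, and classical perturbation theory for the symmetric eigenproblem (e.g.\ the Davis--Kahan theorem) gives $\hat\xi=\xi^{bs}+O(\eps)$. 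Third, since $L^{bs}(\xi^{bs})$ has simple eigenvalues, the Schur decomposition is a locally analytic function of its argument; applying this to \reff{Schur:MWls} yields, after an appropriate reordering of columns, $\hat Q=Q^{bs}+O(\eps)$ and hence, from \reff{hatv:ls:aprx}, $v_i^{ls}=v_i^{bs}+O(\eps)$ for $i=1,\ldots,r$. Fourth, assumption~(i) implies that the coefficient matrix in the linear least squares \reff{LS:vls*lmd=F} has full column rank at $(v_i^{bs})$; by continuity it keeps this property at $(v_i^{ls})$ for small $\eps$, and assumption~(iii) tells us that $(\lmd_i^{bs})$ is the exact solution at $(v_i^{bs})$, so a further least-squares perturbation bound gives $\lmd_i^{ls}=\lmd_i^{bs}+O(\eps)$. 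Combining these with the multilinear (hence locally Lipschitz) dependence of the outer product $u^{\otimes m}$ on $u$ then yields $\|\mc{X}^{gp}-\mc{X}^{bs}\|=O(\eps)$, which is the first assertion of \reff{err:F-Xrc:bs}. The triangle inequality gives $\|\mc{F}-\mc{X}^{gp}\|\le\|\mc{F}-\mc{X}^{bs}\|+\|\mc{X}^{bs}-\mc{X}^{gp}\|=\eps+O(\eps)=O(\eps)$, and since $\mc{X}^{opt}$ is an optimizer of \reff{nLS:lra:symF} initialized at $(u_i^{ls})$ we obtain $\|\mc{F}-\mc{X}^{opt}\|\le\|\mc{F}-\mc{X}^{gp}\|$, completing the bound in \reff{err:F-Xrc:bs}.

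The main obstacle, and the place where the hypotheses really bite, is Step~2--3: justifying that the Schur primitive \reff{Schur:MWls} is a Lipschitz function of $\mc{F}$ near $\mc{X}^{bs}$. Two subtleties have to be handled. First, $\hat\xi$ is only defined up to a unimodular scalar and up to a choice of eigenvector in any multiple eigenspace of $\widehat V$; assumption~(iv), applied at a limit point, lets us pick a consistent branch on a neighborhood of $\mc{X}^{bs}$. Second, the columns of $\hat Q$ are only defined up to permutation and unit phase, so the statement $v_i^{ls}=v_i^{bs}+O(\eps)$ must be read modulo a relabeling of the index $i$; this is precisely what simple spectrum of $L^{bs}(\xi^{bs})$ provides through the gap-controlled eigenvector perturbation bound. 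Once these identifications are fixed, the rest is a routine composition of Lipschitz maps, and the constants in the $O(\cdot)$ symbols can be read off from $\|\mc{F}\|$, the smallest singular values of the matrices $A[\mc{F},\af]$ and $\big([v_1^{bs}]_{\bB_0}\;\cdots\;[v_r^{bs}]_{\bB_0}\big)$, and the eigenvalue gap of $L(\hat\xi)$, all of which depend only on $\mc{F}$.
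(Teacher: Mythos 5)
Your overall strategy matches the paper's: treat $\eps=0$ as the clean case, push $O(\eps)$ bounds through the linear least squares that produces $G^{ls}$, through the Schur decomposition, and through the final least squares for $\lmd$. Steps~1 and~4 of your plan are essentially identical to the paper's proof. However, your Step~2 contains a genuine error. You claim $\hat\xi=\xi^{bs}+O(\eps)$ via Davis--Kahan, but at $\eps=0$ the companion matrices $M_{x_i}(G^{bs})$ commute, so every $M^{i,j}$ vanishes and $\widehat V^{bs}=0$: the smallest eigenvalue of $\widehat V^{bs}$ has full multiplicity, the eigenvalue gap is zero, and the eigenvector is completely undetermined. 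Davis--Kahan gives nothing, and $\hat\xi$ has no reason to converge to any particular unit vector as $\eps\to 0$. The paper sidesteps this entirely: it never asserts that $\hat\xi$ converges. Instead it fixes whatever $\hat\xi$ the algorithm returned and compares the Schur decomposition of $L(\hat\xi,G^{ls})$ with a Schur decomposition of $L(\hat\xi,G^{bs})$, using the \emph{same} $\hat\xi$ in both; the $O(\eps)$ proximity of these two matrices follows directly from $\|G^{ls}-G^{bs}\|_F=O(\eps)$, and condition~(iv) guarantees $L(\hat\xi,G^{bs})$ has simple spectrum when $\eps$ is small.

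There is a second, subtler gap in Step~3. You assert that ``Schur decomposition is locally analytic'' and immediately conclude $v_i^{ls}=v_i^{bs}+O(\eps)$ from \reff{hatv:ls:aprx}. But it is not automatic that the particular Schur decomposition $Q_1^* L(\hat\xi,G^{bs})Q_1=T_1$ that lies $O(\eps)$-close to $(\hat Q,\hat T)$ produces the vectors $v_i^{bs}$ when you substitute $Q_1$ and $M_{x_j}(G^{bs})$ into the recovery formula \reff{hatv:ls:aprx}. Schur bases are far from unique, and verifying that the nearby one ``reads off'' the right eigenvalues requires an argument. The paper proves this by constructing a simultaneous diagonalization $P^{-1}M_{x_j}(G^{bs})P=D_j$ (valid since the $v_i^{bs}$ are distinct, via condition~(i) and \cite[Corollary~2.7]{Stu02}), showing $Q_1R_1=PD_0$ for upper triangular $R_1$ and diagonal $D_0$, and deducing that $Q_1(:,i)^*M_{x_j}(G^{bs})Q_1(:,i)$ equals the $j$th coordinate of some $v_k^{bs}$ up to a permutation of indices. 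That bookkeeping is the substantive content of the Schur step, and your proposal leaves it implicit. The remaining steps, in particular the observation that $\omega$ does not exist under~(ii), the least-squares perturbation bound for $G^{ls}$, the full column rank at $(v_i^{bs})$ implied by~(i), and the use of~(iii) to identify $(\lmd_i^{bs})$ as the exact least-squares solution, are all correct and match the paper.
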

\begin{proof} \,
By the condition i) and Theorem~\ref{thm:dstcV=>lmd},
there exists a generating matrix
$G^{bs} \in \cpx^{\mathbb{B}_0 \times \mathbb{B}_1}$ for $\mc{X}^{bs}$,
i.e., for all $\af \in \mathbb{B}_1$,
\[
\varphi[G^{bs},\af]( v_1^{bs} ) = \cdots =
\varphi[G^{bs},\af]( v_r^{bs} )  = 0
\, \mbox{ and } \,
A[\mc{X}^{bs},\af] \, G^{bs}(:,\af) = b[\mc{X}^{bs},\af].
\]
By \reff{Ab:F=Xls+E}, for all $\af \in \mathbb{B}_1$, we have
\[
\big\| A[\mc{F},\af]-A[\mc{X}^{bs},\af] \big\|_F \leq \eps
\, \mbox{ and } \,
\big\| b[\mc{F},\af]-b[\mc{X}^{bs},\af] \big\| \leq \eps.
\]
By the condition ii), the least square problem~\reff{ls:Aw=b:af}
has a unique minimizer, so there is no parameter
$\omega$ in \reff{omg:all}. Hence, we have
\[
G^{ls}(:,\af) = \big(A[\mc{F},\af]\big)^\dag b[\mc{F},\af]
\, \mbox{ and } \,
G^{bs}(:,\af) = \big(A[\mc{X}^{bs},\af]\big)^\dag b[\mc{X}^{bs},\af],
\]
where the superscript $^\dag$ denotes
the Moore-Penrose pseudoinverse~\cite{Demmel}.
When $\eps$ is sufficiently small, we have
\[
\| G^{ls}(:,\af) - G^{bs}(:,\af) \| = O(\epsilon).
\]
This follows from Theorem~3.4 of the book~\cite{Demmel}
or Theorem~3.9 of the book~\cite{SteSun90}.
The constant in the above $O(\epsilon)$ only depends on $\mF$.
Hence,
\[
\| G^{ls} - G^{bs} \|_F = O(\epsilon).
\]
Denote the matrix function in $\hat{\xi}$ and $G$
\[
L(\hat{\xi}, G) := \hat{\xi}_1 M_{x_1}(G) + \cdots + \hat{\xi}_{\on} M_{x_{\on}}(G).
\]
Then we can see that
\[
\| L( \hat{\xi},G^{bs}) - L( \hat{\xi},G^{ls}) \|_F
= \| \Sig_{i=1}^{\on} \hat{\xi}_i M_{x_i}(G^{bs}-G^{ls}) \|_F
\]
\[
\leq  \big(\Sig_{i=1}^{\on} |\hat{\xi}_i| \big)\,
\max_i \|  M_{x_i}(G^{bs}-G^{ls})  \|_F
\leq  \sqrt{\on} \|  G^{bs}-G^{ls}  \|_F = O(\epsilon).
\]
Recall that $\hat{\xi}$ is an eigenvector associated
with the least eigenvalue of $\hat{V}$ constructed for $G^{ls}$.
The last inequality follows from
$\Sig_{i=1}^{\on} |\hat{\xi}_i|  \leq \sqrt{\on} \| \hat{\xi} \|$
and the definition of $M_{x_i}(G)$, as in \reff{df:Mxi(W)}.
For each $i$, $(v_i^{bs})_1, \ldots, (v_i^{bs})_{\on}$
are respectively the eigenvalues of the companion matrices
\[
M_{x_1}(G^{bs}), \,  \ldots, \, M_{x_{\on}}(G^{bs}),
\]
with a common eigenvector \cite[\S2]{GPSTD}.
%
%By Stickelberger's Theorem (cf.~Sturmfels~\cite[Theorem~2.6]{Stu02}),
%
The eigenvalues of $L(\hat{\xi}, G^{bs})$ are
\[
\hat{\xi}^Tv_1^{bs}, \ldots, \hat{\xi}^T v_r^{bs}.
\]
By the condition iv), the matrix
$L(\hat{\xi}, G^{bs})$ does not have a repeated eigenvalue
when $\eps>0$ is small enough.
Recall that the unitary $\hat{Q}$ and upper triangular $\hat{T}$
are from the Schur decomposition~\reff{Schur:MWls} of
$L(\hat{\xi}, G^{ls}) = L(\hat{\xi})$ as in \reff{M(xiWls)}.
For $\eps >0$ small, there exist a unitary matrix $Q_1$
and an upper triangular matrix $T_1$ such that
$Q_1^* L( \hat{\xi}, G^{bs}) Q_1 = T_1$,
which satisfy
\be \label{SchurQTerr}
\| \hat{Q} - Q_1\|_F = O(\eps) \, \mbox{ and } \,
\| \hat{T} - T_1\|_F = O(\eps).
\ee
This can be derived from \cite{KPC94} or \cite[Theorem~4.1]{Sun95}.
The constants in the above $O(\eps)$
depend on $G^{ls}$, and then eventually only depend on $\mF$.

\,

The matrices $L(\hat{\xi}, G^{bs})$ and $T_1$ have common eigenvalues.
Because $T_1$ is upper triangular and its diagonal entries are all distinct,
there exists an upper triangular nonsingular matrix $R_1$ such that
$
\Lambda_1 \, := \, R_1^{-1} T_1 R_1
$
is diagonal. This results in the eigenvalue decomposition
\[
(Q_1R_1)^{-1} L( \hat{\xi}, G^{bs}) (Q_1R_1) = \Lambda_1.
\]
Since $v_1^{bs}, \ldots, v_r^{bs}$ are pairwise distinct,
which is implied by the condition i),
the polynomials $\varphi[G^{bs},\af]$ ($\af \in \mathbb{B}_1$)
do not have a repeated zero. So, the companion matrices
$
M_{x_1}(G^{bs}), \ldots, M_{x_{\on}}(G^{bs})
$
can be simultaneously diagonalized \cite[Corollary~2.7]{Stu02}.
There exist a nonsingular matrix $P$
and diagonal matrices $D_1,\ldots,D_n$ such that
\[
P^{-1} M_{x_1}(G^{bs}) P = D_1, \quad \ldots, \quad
P^{-1} M_{x_n}(G^{bs}) P = D_n,
\]
so that
\[
P^{-1} L(\hat{\xi}, G^{bs}) P = \sum_i \hat{\xi}_i D_i .
\]
Since $L(\hat{\xi}, G^{bs})$ does not have a repeated eigenvalue,
each eigenvector is unique up to scaling.
So, there exists a diagonal matrix $D_0$ such that
$ Q_1 R_1 = P D_0.$ For each $j$, the matrix
\[
\baray{rcl}
Q_1^* M_{x_j}(G^{bs}) Q_1
&=& R_1 D_0^{-1}  P^{-1} M_{x_j}(G^{bs}) P D_0 R_1^{-1} \\
&=& R_1 D_0^{-1} D_j D_0  R_1^{-1} = R_1   D_j  R_1^{-1}  =: \, S_j
\earay
\]
is upper triangular.
Note that $R_1D_jR_1^{-1}$ is an eigenvalue decomposition,
so that the diagonals of $D_j$ and $S_j$ are the same.
So, for all $i,j$, we have
\[
Q_1(:,i)^*M_{x_j}(G^{bs})Q_1(:,i) =
P^{-1}(i,:)M_{x_j}(G^{bs})P(:,i).
\]
($P^{-1}(i,:)$ denotes the $i$-th row of $P^{-1}$.)
For each $i$, the vector
\[
\bbm
P^{-1}(i,:)M_{x_1}(G^{bs})P(:,i) \\ \vdots \\ P^{-1}(i,:)M_{x_{\on}}(G^{bs})P(:,i))
\ebm
\]
is one of $v_1^{bs}, \ldots, v_r^{bs}$.
Up to a permutation of indices, we have
\[
v_i^{bs}  =
\bbm
Q_1(:,i)^*M_{x_1}(G^{bs})Q_1(:,i) \\ \vdots
\\ Q_1(:,i)^*M_{ x_{\bar{n}} }(G^{bs})Q_1(:,i))
\ebm.
\]
By \reff{hatv:ls:aprx} and \reff{SchurQTerr},
the above implies that (up to permutation of indices)
\be \label{vi:ls=bs+eps}
v_i^{ls} = v_i^{bs} + O(\eps) \, \,( i = 1,\ldots,r).
\ee
The constants in the above $O(\eps)$ eventually only depend on $\mF$.

\medskip

In Algorithm~\ref{alg:tsr:aprox},
$(\lmd_1^{ls}, \ldots, \lmd_r^{ls})$ is an optimizer of
\[
\min_{ (\lmd_1, \ldots, \lmd_r) \in \cpx^r }
\| \lmd_1 (1, v_1^{ls})^{\otimes m} + \cdots +
\lmd_r  (1, v_r^{ls})^{\otimes m} - \mF \|^2,
\]
while $(\lmd_1^{bs}, \ldots, \lmd_r^{bs})$ is an optimizer of
\[
\min_{ (\lmd_1, \ldots, \lmd_r) \in \cpx^r }
\| \lmd_1 (1, v_1^{bs})^{\otimes m} + \cdots +
\lmd_r  (1, v_r^{bs})^{\otimes m} - \mF \|^2,
\]
by the condition iii).
The tensors $(1,v_1^{bs})^{\otimes m}, \ldots,
(1,v_r^{bs})^{\otimes m}$ are linearly independent,
by the condition i). For $\eps>0$ small enough,
by \reff{vi:ls=bs+eps}, we can get
\[
 \|\lmd^{ls} - \lmd^{bs}\| = O(\eps)
\, \mbox{ and } \,
\|  \mc{X}^{gp}   -  \mc{X}^{bs}   \| = O(\eps).
\]
Hence,
\[
\|  \mc{F}   -  \mc{X}^{gp}   \|  \leq
\|  \mc{F}   -  \mc{X}^{bs} \| +
\|  \mc{X}^{bs}   -  \mc{X}^{gp}   \| = O(\eps).
\]
The constants in the above $O(\eps)$ eventually only depend on $\mF$.
Moreover, $\mc{X}^{opt}$ is improved from $\mc{X}^{gp}$
by solving the optimization problem \reff{nLS:lra:symF}.
So, $\|  \mc{F}   -  \mc{X}^{opt}   \|  \leq \|  \mc{F}   -  \mc{X}^{gp}   \|$,
and the proof is complete.
\end{proof}

In the condition i) of Theorem~\ref{thm:lrkapx:err},
we assume the linear independence of the vectors
$[v_1^{bs}]_{\mathbb{B}_0},\ldots, [v_r^{bs}]_{\mathbb{B}_0}$,
instead of $v_1^{bs} ,\ldots, v_r^{bs}$.
For $r>n$, it is still possible that
$[v_1^{bs}]_{\mathbb{B}_0},\ldots, [v_r^{bs}]_{\mathbb{B}_0}$
are linearly independent.
Algorithm~\ref{alg:tsr:aprox} would still produce
good low rank approximations when $r$ is big.
Please see the numerical experiments in Example~\ref{exmp:bigr}.
In the condition ii), each matrix $A[\mF,\af]$ has full column rank
only if its column number does not exceed the row number.
This is the case for all $A[\mF,\af]$ if and only if
$
r \leq \binom{n+m_1-1}{m_1}
$
where $m_1 = \left\lfloor \frac{m-1}{2} \right\rfloor$.
When $m=3,4$, the above requires $ r \leq n$;
when $m=5,6$, it requires $r \leq \binom{n+1}{2}$.
The constants hidden in $O(\cdot)$ of \reff{err:F-Xrc:bs}
are estimated in Example~\ref{emp:rler:LRsTA}.
In particular, if $\eps=0$, we can get $\mF = \mc{X}^{gp}$.

\begin{cor}[\cite{GPSTD}] \label{cor:apx=>TD}
Under the assumption of Theorem~\ref{thm:lrkapx:err},
if $\rank_S(\mc{F})=r$, then $\mc{X}^{gp}$ given by
Algorithm~\ref{alg:tsr:aprox} gives a Waring decomposition for $\mF$.
\end{cor}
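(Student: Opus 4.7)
The plan is to specialize Theorem~\ref{thm:lrkapx:err} to the noiseless case. Since $\rank_S(\mc{F}) = r$, the best rank-$r$ approximation of $\mc{F}$ can be chosen as $\mc{X}^{bs} = \mc{F}$ itself, whence $\mc{E} = \mc{F} - \mc{X}^{bs} = 0$ and $\eps = \|\mc{E}\| = 0$. All four conditions i)--iv) hold by hypothesis; in particular, the scaling-optimality condition iii) is trivially satisfied, since any exact Waring decomposition of $\mc{F}$ already makes the residual in \reff{opt-scal:vbs} vanish.

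Applying the bound $\|\mc{X}^{bs} - \mc{X}^{gp}\| = O(\eps)$ from \reff{err:F-Xrc:bs} with $\eps = 0$, together with the fact that the implicit constant depends only on $\mc{F}$, immediately yields $\mc{X}^{gp} = \mc{X}^{bs} = \mc{F}$. Since by construction
\[
\mc{X}^{gp} = (u_1^{ls})^{\otimes m} + \cdots + (u_r^{ls})^{\otimes m},
\]
this equality exhibits the desired Waring decomposition of $\mc{F}$.

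If one prefers to avoid invoking an asymptotic bound at the degenerate point $\eps = 0$, the proof of Theorem~\ref{thm:lrkapx:err} specializes cleanly. With $A[\mc{F},\af] = A[\mc{X}^{bs},\af]$ and $b[\mc{F},\af] = b[\mc{X}^{bs},\af]$ holding exactly, condition ii) forces $G^{ls}(:,\af) = G^{bs}(:,\af)$ for every $\af \in \mathbb{B}_1$, and hence $L(\hat{\xi}, G^{ls}) = L(\hat{\xi}, G^{bs})$. Conditions i) and iv) guarantee that this matrix has simple spectrum with eigenvalues $\hat{\xi}^T v_1^{bs}, \ldots, \hat{\xi}^T v_r^{bs}$, and the simultaneous-diagonalization argument used inside the proof of Theorem~\ref{thm:lrkapx:err} gives $v_i^{ls} = v_i^{bs}$ up to permutation. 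Then \reff{LS:vls*lmd=F} admits $(\lmd_1^{bs}, \ldots, \lmd_r^{bs})$ as a zero-residual solution, so $\lmd_i^{ls} = \lmd_i^{bs}$ and $\mc{X}^{gp} = \mc{F}$. There is no real obstacle here: the corollary is essentially a boundary case of the perturbation theorem, and the only thing to check is that each $O(\eps)$ bound collapses to zero when $\eps = 0$, which it does because the hidden constants depend only on $\mc{F}$.
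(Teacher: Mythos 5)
Your proposal is correct and takes essentially the same route the paper intends: the paper does not spell out a proof but immediately before the corollary remarks ``In particular, if $\eps = 0$, we can get $\mF = \mc{X}^{gp}$,'' which is precisely your specialization of Theorem~\ref{thm:lrkapx:err} at $\mc{X}^{bs} = \mc{F}$, $\eps = 0$. Your second paragraph, which retraces the proof of Theorem~\ref{thm:lrkapx:err} with exact equalities ($G^{ls} = G^{bs}$, then $v_i^{ls} = v_i^{bs}$ up to permutation via the common Schur/simultaneous-diagonalization argument, then $\lmd_i^{ls} = \lmd_i^{bs}$), is a slightly more careful version that avoids any worry about invoking an $O(\eps)$ bound at the boundary $\eps = 0$; it is a sound and welcome addition but not a genuinely different argument.
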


%%%%%%%%%%%%%%%%%%%%%%%%%%%%%%%%
\iffalse

The assumptions in Theorem~\ref{thm:lrkapx:err} are often satisfied.
In contrast to the method in \cite{GPSTD},
Algorithm~\ref{alg:tsr:aprox} is more computationally tractable, because it
only requires to solve some linear least squares and Schur decompositions.
This is very efficient in applications, especially
for computing low rank decompositions for large tensors.
We refer to Example~\ref{sym:lwrk:STD}.

\fi
%%%%%%%%%%%%%%%%%%%%%%%%%%%

\section{Numerical experiments}
\label{sc:comp}
\setcounter{equation}{0}

In this section, we present numerical experiments
for computing low rank symmetric tensor approximations.
The computations were performed in MATLAB R2012a,
on a Lenovo Laptop with CPU@2.90GHz and RAM 16.0G.
In the Step~1 and Step~5 of Algorithm~\ref{alg:tsr:aprox},
the MATLAB function {\tt lsqnonlin} is used to solve
the nonlinear least squares problems
\reff{nls:omg:cmmu} and \reff{nLS:lra:symF}.
In the parameters for {\tt lsqnonlin},
{\tt MaxFunEvals} is set to be $10000$, and {\tt MaxIter} is $1000$.
The rank $r$ is chosen according to the first remark
in subsection~\ref{ssc:rks}.

We display a rank-$r$ tensor
$
\mc{X}= (u_1)^{\otimes m} + \cdots + (u_r)^{\otimes m}
$
by showing the decomposing vectors
$u_1,\dots, u_r$. Each $u_i$ row by row,
separated by parenthesises, with four decimal digits for each entry.

\subsection{Some examples}

\begin{exm}  \label{exmp-sin}
Consider the cubic tensor $\mF \in \mt{S}^3( \cpx^n )$ such that
\[
\mF_{i_1 i_2 i_3} = \sin( i_1 + i_2 + i_3).
\]
For $n=6$, the $3$ biggest singular values of
$\mbox{Cat}(\mF)$ are
\[
   5.7857, \quad   5.4357, \quad  7 \times 10^{-16}.
\]
%By Remark~\ref{est:rk:cat},
As in the subsection~\ref{ssc:rks},
we consider the rank-$2$ approximation.
When Algorithm~\ref{alg:tsr:aprox} is applied, we get the errors
\[
\| \mF - \mc{X}^{gp} \| \approx  5 \times 10^{-14},
\quad
\| \mF - \mc{X}^{opt} \| \approx  1 \times 10^{-15}.
\]
It took about $0.4$ second.
The computed rank-$2$ approximation $\mc{X}^{opt}$ is given as:
{\tiny
\begin{verbatim}
(0.7053-0.3640i   0.0748-0.7902i  -0.6245-0.4899i  -0.7496+0.2608i  -0.1856+0.7717i   0.5491+0.5731i),
(0.7053+0.3640i   0.0748+0.7902i  -0.6245+0.4899i  -0.7496-0.2608i  -0.1856-0.7717i   0.5491-0.5731i).
\end{verbatim} \noindent}The approximation tensors
$\mc{X}^{gp}$ and $\mc{X}^{opt}$ indeed
give a rank decomposition for $\mc{F}$, up to a tiny round-off error.
The computation is similar for other values of $n$.
%%%%% codes %%%%%%%%%%
\iffalse

clear all,
n = 6;
for i1 = 1 : n,   for i2 = 1 : n,  for i3 = 1 : n
           F(i1,i2,i3) = sin( i1 + i2 + i3 );
end,  end, end
%
CatMat = symtsr2CatalMat(F);
sv = svd(CatMat);
sv,
%
startime = tic;
[Xgr, err_gr, Xopt, err_opt] = SymLRTAGP(F, 2);
cptime = toc(startime);
[err_gr, err_opt],
cptime,
%
for k = 1 : size(Xopt,2)
Xgr(:,k).',
end
Xgr*diag( 1./Xgr(1,:) ),

\fi
%%%%%%%%%%%%%%%%%%%%%%%%
\end{exm}

\begin{exm} \label{exmp-rootsum}
Consider the quartic tensor $\mF \in \mt{S}^4( \cpx^5 )$ such that
\[
\mF_{i_1 i_2 i_3 i_4}  =   \sqrt{i_1+i_2+i_3+i_4}.
\]
The $5$ biggest singular values of $\mbox{Cat}(\mF)$ are respectively
\[
51.9534   ,\quad   0.9185   ,\quad   0.0133   ,\quad   0.0003  ,\quad  5.6 \times 10^{-6}.
\]
According to the subsection \ref{ssc:rks},
we consider rank-$4$ approximation and apply Algorithm~\ref{alg:tsr:aprox}.
It took about $1.8$ seconds. The approximation errors are:
\[
\|\mF-\mc{X}^{gp}\| \approx   0.1555, \quad
\|\mF-\mc{X}^{opt}\| \approx  0.0002.
\]
The tensor $\mc{X}^{opt}$ is given as:
{\tiny
\begin{verbatim}
(0.4075 + 0.4779i   0.2553 + 0.3523i   0.1582 + 0.2576i   0.0956 + 0.1869i   0.0563 + 0.1345i),
(0.2889 - 0.0030i   0.1598 + 0.0804i   0.0713 + 0.0886i   0.0165 + 0.0700i  -0.0089 + 0.0444i),
(0.8495 + 0.8887i   0.7904 + 0.8412i   0.7351 + 0.7961i   0.6836 + 0.7533i   0.6356 + 0.7127i),
(1.4501 + 0.0209i   1.4664 + 0.0191i   1.4828 + 0.0173i   1.4994 + 0.0154i   1.5162 + 0.0135i).
\end{verbatim}
}
%%%%%%%%%%%%%%%%%%%%%%%%%%%%%%%%%%%%%%%%%%%%%%%%%%%%%%%%%%%%%%%%%%%
\iffalse

clear all,
n = 5;  deg = 4;
for i1 = 1 : n, for i2 = 1 : n, for i3 = 1 : n, for i4 = 1 : n
  tensor(i1,i2,i3,i4) =  sqrt(i1+i2+i3+i4);
end, end, end, end
%
CatMat = symtsr2CatalMat(tensor);
sv = svd(CatMat);
sv',
%
startime = tic;
[Xgr4, gr_err4, Xopt4, opt_err4] = SymLRTAGP(tensor, 4);
cptime = toc(startime);
%
[gr_err4, opt_err4],
cptime,
%
for k = 1 : size(Xopt4,2)
 Xopt4(:,k).',
end
%
Xopt4*diag([1./Xopt4(1,:)]),

\fi
%%%%%%%%%%%%%%%%%%%%%%%%%%%%%%%%%%%%%%%%%%%%%%%%%%%%%%%%%%%%%%%%%%%%
\end{exm}

\begin{exm}
Consider the cubic tensor $\mF \in \mt{S}^3( \re^n )$ such that
\[
\mF_{i_1 i_2 i_3} =  i_1 + i_2 + i_3.
\]
The catalecticant matrix $\mbox{Cat}(\mF)$ has rank $2$.
The symmetric border rank of $\mF$ is $2$,
while its symmetric rank is $3$
(cf.~\cite[\S8.1]{CGLM08}). This is because
\[
\mF = a \otimes a \otimes b + a \otimes b \otimes a
+ b \otimes a \otimes a
= \lim_{\eps \to 0} \frac{1}{\eps}
\left( (a + \eps b)^{\otimes 3} - a^{\otimes 3} \right),
\]
where $a = (1, \ldots, 1)$ and $b=(1,2,\ldots, n)$.
The best rank-$2$ approximation does not exist,
but $\mF$ is arbitrarily close to a rank-$2$ tensor.
For convenience of discussion, consider the value $n=5$.
When $r=2$, the monomial sets $\bB_0, \bB_1$ are
\[
\bB_0 = \{1, x_1 \}, \quad
\bB_1 = \{ x_2, x_3, x_4, x_1^2, x_1x_2, x_1x_3, x_1x_4\}.
\]
The generating polynomials
$\varphi[G,\af](x)$ of the format \reff{vphi:W-af} are:
%\[
%\baray{c}
%-1+2x_1-x_2,\, -2+3x_1-x_3, \, -3+4x_1-x_4, \\
%-1+2x_1-x_1^2, \, -2+3x_1-x_1x_2,\, -3+4x_1-x_1x_3, \, -4+5x_1-x_1x_4.
%\earay
%\]
\[
\baray{ll}
\varphi[G,(0,1,0,0)](x) = -1+2x_1-x_2,
&\varphi[G,(0,0,1,0)](x) = -2+3x_1-x_3,  \\
\varphi[G,(0,0,0,1)](x) = -3+4x_1-x_4,
&\varphi[G,(2,0,0,0)](x) = -1+2x_1-x_1^2, \\
\varphi[G,(1,1,0,0)](x) = -2+3x_1-x_1x_2,
&\varphi[G,(1,0,1,0)](x) = -3+4x_1-x_1x_3, \\
\varphi[G,(1,0,0,1)](x) = -4+5x_1-x_1x_4.
&
\earay
\]
They have a repeated common zero,
which is $(1,1,1,1)$. The companion matrices
$M_{x_1}(G^{ls})$, $M_{x_2}(G^{ls})$, $M_{x_3}(G^{ls})$,
$M_{x_4}(G^{ls})$ are respectively
\[
\left[\baray{rr}  0 & -1  \\ 1  & 2 \earay\right],
\left[\baray{rr} -1 & -2  \\ 2  & 3 \earay\right],
\left[\baray{rr} -2 & -3  \\ 3  & 4 \earay\right],
\left[\baray{rr} -3 & -4  \\ 4  & 5 \earay\right].
\]
They have the repeated eigenvalue $1$ and are not diagonalizable.
We apply Algorithm~\ref{alg:tsr:aprox} to compute
its rank-$2$ approximation. It took about $0.3$ second. At Step~3,
the computed vectors $v_1^{ls}, v_2^{ls}$ are
{\tiny
\[
\bbm
  1.000000000000008 + 0.000000605979355i \\
  0.999999999999994 + 0.000001211958710i \\
  1.000000000000009 + 0.000001817938065i \\
  1.000000000000005 + 0.000002423917420i
\ebm, \quad
\bbm
  1.000000000000000 - 0.000000605979355i \\
  1.000000000000000 - 0.000001211958710i \\
  0.999999999999999 - 0.000001817938065i \\
  1.000000000000000 - 0.000002423917420i
\ebm.
\]
\noindent}They are quite close to $(1,1,1,1)$,
but not exactly same, because of round-off errors.
The resulting approximation errors are
\[
\| \mF - \mc{X}^{gp} \| \approx  8 \times 10^{-8}, \quad
\| \mF - \mc{X}^{opt} \| \approx  2 \times 10^{-9}.
\]
The approximating tensors $\mc{X}^{gp}$ and $\mc{X}^{opt}$ are almost same.
They are given as
{\tiny
\begin{verbatim}
(81.2272-46.8965i  81.2272-46.8964i  81.2273-46.8964i  81.2273-46.8963i  81.2273-46.8963i),
(81.2272+46.8965i  81.2272+46.8964i  81.2273+46.8964i  81.2273+46.8963i  81.2273+46.8963i).
\end{verbatim} \noindent}The
condition~iv) of Theorem~\ref{thm:lrkapx:err} is not satisfied.
%so it does not guarantee that
%$\mc{X}^{gp}$ is a good approximation.
However, in the computation, $\mc{X}^{gp}$ and $\mc{X}^{opt}$
are actually high quality rank-$2$ approximations.
%%%%% codes %%%%%%%%%%
%%%%%%%%%%%%%%%%%%%%%%%%%%%%%%%%%%%%%%%%%%%%%%%%%%%%%%%%%%%%%%%%%%%
\iffalse

clear all,
n = 5;
for i1 = 1 : n, for i2 = 1 : n, for i3 = 1 : n,
 F(i1,i2,i3) =  i1+i2+i3;
end, end, end
%
CatMat = symtsr2CatalMat(F);
sv = svd(CatMat);
sv',
startime = tic;
[Xgr2, gr_err2, Xopt2, opt_err2] = SymLRTAGP(F, 2);
cptime = toc(startime);
%
[gr_err2, opt_err2 ],
cptime,
%
for k= 1 : size(Xopt2, 2)
  Xopt2(:,k).',
end
%
Xgr2*diag([1./Xgr2(1,:)]),
Xopt2*diag([1./Xopt2(1,:)]),
%

\fi
%%%%%%%%%%%%%%%%%%%%%%%%%%%%%%%%%%%%%%%%%%%%%%%%%%%%%%%%%%%%%%%%%%%%
\end{exm}

\subsection{Approximation quality}

We present numerical experiments for exploring approximation qualities
of the low rank tensors $\mc{X}^{gp}$, $\mc{X}^{opt}$
given by Algorithm~\ref{alg:tsr:aprox}.
By Theorem~\ref{thm:lrkapx:err}, if $\mF  \in \mt{S}^m(\cpx^n)$
is sufficiently close to a rank-$r$ tensor, then $\mc{X}^{gp}$ and
$\mc{X}^{opt}$ are quasi-optimal rank-$r$ approximations.
First, we provide numerical experiments to estimate
the constants hidden in $O(\cdot)$ of \reff{err:F-Xrc:bs}.

\bcen
\begin{table}[htb]
\caption{The relative error bounds and computational time (in seconds)
for rank-$r$ approximations in $\mt{S}^3(\cpx^{10})$.}
{\small
\btab{ccrrrrrcllrc}  \specialrule{.2em}{0em}{0.1em}
\multirow{2}{*}{$r$}   & \multirow{2}{*}{$\eps$}   &  \multicolumn{5}{c}{ \tt err\text{-}gp } &  &
\multicolumn{2}{c}{ \tt err\text{-}opt }  & & {\tt time} \\  \cmidrule{3-7}  \cmidrule{9-10}  \cmidrule{12-12}
  &    & min & 25th &  50th & 75th & max  & & min & max  && average \\  \specialrule{.1em}{.1em}{0.1em}
\multirow{3}{*}{$1$}
& $10^{-2}$  & 0.961 & 0.974 & 0.979 & 0.983 & 0.997  & & 0.960 & 0.996  && 0.15   \\  \cmidrule{3-7}  \cmidrule{9-10} \cmidrule{12-12}
& $10^{-4}$  & 0.959 & 0.973 & 0.978 & 0.983 & 0.994  & & 0.956 & 0.993  && 0.14   \\  \cmidrule{3-7}  \cmidrule{9-10} \cmidrule{12-12}
& $10^{-6}$  & 0.954 & 0.974 & 0.978 & 0.985 & 0.992  & & 0.952 & 0.992  && 0.14 \\  \specialrule{.1em}{0.1em}{0.1em}
\multirow{3}{*}{$2$}
& $10^{-2}$  & 0.99  & 1.7  & 4.2  &  10.7 & 501.0 && 0.931  & 0.981  &&  0.23     \\  \cmidrule{3-7}  \cmidrule{9-10} \cmidrule{12-12}
& $10^{-4}$  & 0.95  & 1.6  & 3.3  & 13.4 & 329.9  && 0.917  & 0.979  &&  0.23    \\  \cmidrule{3-7}  \cmidrule{9-10} \cmidrule{12-12}
& $10^{-6}$  & 1.06  & 2.1  & 5.1  &  14.5 & 573.1 && 0.933  & 0.977  &&  0.23     \\  \specialrule{.1em}{.1em}{0.1em}
\multirow{3}{*}{$3$}
& $10^{-2}$  &  1.69  & 8.1  & 17.4 &  56.9 &  1732.0  && 0.907 &  0.962  &&  0.32      \\  \cmidrule{3-7}  \cmidrule{9-10} \cmidrule{12-12}
& $10^{-4}$  &  1.69  & 6.6  & 15.1 &  60.1 & 4787.9   && 0.904 &  0.969  &&  0.32    \\  \cmidrule{3-7}  \cmidrule{9-10} \cmidrule{12-12}
& $10^{-6}$  &  1.17  & 7.1  & 18.3 &  63.1 & 1152.4   && 0.915 &  0.966  &&  0.31     \\  \specialrule{.1em}{.1em}{.1em}
\multirow{3}{*}{$4$}
& $10^{-2}$  &  1.38  & 5.9  & 12.5 &  41.2 &  620.8  &&  0.875  &  0.953  &&  0.40     \\  \cmidrule{3-7}  \cmidrule{9-10} \cmidrule{12-12}
& $10^{-4}$  &  1.76  & 6.5  & 15.5 & 33.8 &  8374.7  &&  0.879  &  0.946  &&  0.40   \\  \cmidrule{3-7}  \cmidrule{9-10} \cmidrule{12-12}
& $10^{-6}$  &  1.72  & 5.8  & 13.3 & 55.7 &  9049.8  &&  0.882  &  0.950  &&  0.39    \\  \specialrule{.1em}{.1em}{0.1em}
\multirow{3}{*}{$5$}
&  $10^{-2}$ &  2.45  & 7.7  &  18.0 &  68.8 & 5162.0  && 0.838 &  0.927   && 0.49  \\  \cmidrule{3-7}  \cmidrule{9-10} \cmidrule{12-12}
& $10^{-4}$  & 1.75  &  10.1 &  24.9 &  69.7 & 7316.6  && 0.835 &  0.933   && 0.48   \\   \cmidrule{3-7}  \cmidrule{9-10} \cmidrule{12-12}
& $10^{-6}$  &  3.51  & 11.4 &  22.4 &  48.6 & 4483.6  && 0.849 &  0.926   && 0.47   \\
\specialrule{.2em}{.1em}{0em}
\etab
}
\label{tb:rlerr:cubic}
\end{table}
\ecen

\bcen
\begin{table}[htb]
\caption{The relative error bounds and computational time (in seconds)
for rank-$r$ approximations in $\mt{S}^4(\cpx^{10})$.}
{\small
\btab{ccrrrrrcllrc} \specialrule{.2em}{0em}{0.1em}
\multirow{2}{*}{$r$}   & \multirow{2}{*}{$\eps$}   &  \multicolumn{5}{c}{ \tt err\text{-}gp }  & &
\multicolumn{2}{c}{ \tt err\text{-}opt } && {\tt time}   \\  \cmidrule{3-7}  \cmidrule{9-10}  \cmidrule{12-12}
  &    &  min & 25th & 50th & 75th & \quad max  & & min & max  && {average} \\  \specialrule{.1em}{.1em}{0.1em}
\multirow{3}{*}{$1$}
& $10^{-2}$  & 0.988 & 0.992 & 0.994 & 0.996 & 0.998 && 0.988 & 0.998  && 0.66    \\  \cmidrule{3-7}  \cmidrule{9-10} \cmidrule{12-12}
& $10^{-4}$  & 0.988 & 0.993 & 0.995 & 0.996 & 0.999 && 0.988 & 0.999  && 0.67     \\  \cmidrule{3-7}  \cmidrule{9-10}  \cmidrule{12-12}
& $10^{-6}$  & 0.988 & 0.992 & 0.995 & 0.996 & 0.999 && 0.988 & 0.999  && 0.66    \\  \specialrule{.1em}{.1em}{0.1em}
\multirow{3}{*}{$2$}
  & $10^{-2}$  & 0.99 & 1.2 &  1.7  &  5.2  & 1054.7 && 0.980 &  0.995  &&  1.12    \\  \cmidrule{3-7}  \cmidrule{9-10} \cmidrule{12-12}
  & $10^{-4}$  & 0.99 & 1.1  &  1.8  &  5.3  & 1453.1  && 0.981 & 0.997  && 1.02   \\  \cmidrule{3-7}  \cmidrule{9-10}  \cmidrule{12-12}
  & $10^{-6}$  & 0.99  & 1.1  &  1.9  &  6.3  &  688.6 && 0.980 & 0.996  && 1.01    \\  \specialrule{.1em}{.1em}{0.1em}
\multirow{3}{*}{$3$}
  & $10^{-2}$  & 1.05  & 3.2  & 8.4  &  20.5  &  358.6 && 0.975 &  0.992  && 1.47     \\  \cmidrule{3-7}  \cmidrule{9-10} \cmidrule{12-12}
  & $10^{-4}$  &  1.25  & 5.5 & 13.0  &  45.9  &  2418.3  && 0.975 & 0.990 && 1.45    \\  \cmidrule{3-7}  \cmidrule{9-10} \cmidrule{12-12}
  & $10^{-6}$  & 1.01  & 2.5   & 8.4  & 23.6  & 1651.6  && 0.972 &  0.993  && 1.45     \\ \specialrule{.1em}{.1em}{0.1em}
\multirow{3}{*}{$4$}
  & $10^{-2}$  &  1.19  & 2.9  & 7.6  & 19.1 & 5722.1 &&  0.965 &  0.992 &&   1.94   \\  \cmidrule{3-7}  \cmidrule{9-10} \cmidrule{12-12}
  & $10^{-4}$  &  1.18  & 2.8  & 5.8  & 17.8  & 3132.5 &&  0.965 & 0.988 &&  1.92   \\  \cmidrule{3-7}  \cmidrule{9-10} \cmidrule{12-12}
  & $10^{-6}$  & 1.16 & 2.3  & 4.8  & 14.4  &  549.6 &&  0.964 & 0.990  &&  1.92   \\   \specialrule{.1em}{.1em}{0.1em}
\multirow{3}{*}{$5$}
  &  $10^{-2}$ & 1.74  &  3.9 &  8.3  &  22.9  & 3985.6 && 0.961 & 0.984   &&  2.40   \\   \cmidrule{3-7}  \cmidrule{9-10} \cmidrule{12-12}
  & $10^{-4}$  &  1.36 & 3.8  &  8.2 & 20.5  &   1613.7  && 0.959 & 0.983  &&  2.38  \\   \cmidrule{3-7}  \cmidrule{9-10} \cmidrule{12-12}
  & $10^{-6}$  & 1.26  & 5.5   & 12.3 & 28.1  &  1876.8  && 0.961 & 0.988  &&  2.37   \\
  \specialrule{.2em}{.1em}{0em}
\etab
}
\label{tb:err:quar}
\end{table}
\ecen

\begin{exm}  \label{emp:rler:LRsTA}
Generate rank-$r$ tensors of the form
\[
\mc{R} =  (u_1)^{\otimes m} + \cdots + (u_r)^{\otimes m},
\]
where each $u_i \in \cpx^n$ has random real and imaginary parts,
obeying Gaussian distributions.
Then, choose a random tensor $\mc{E} \in \mt{S}^m(\cpx^n)$,
whose entries are all randomly generated.
Scale $\mc{E}$ to have a desired norm $\eps >0$. Let
\[
\mF = \mc{R} + \mc{E}.
\]
Approximation qualities of $\mc{X}^{gp}$ and $\mc{X}^{opt}$
can be measured by the relative errors
\[
{\tt err\text{-}gp} :=  \| \mF - \mc{X}^{gp} \| \,\, / \,\, \eps,
\quad
{\tt err\text{-}opt} :=  \| \mF - \mc{X}^{opt} \| \,\, / \,\, \eps.
\]
They can be used to estimate the constants
hidden in $O(\cdot)$ of \reff{err:F-Xrc:bs} in Theorem~\ref{thm:lrkapx:err}.
We choose the values of $n,m,r, \eps$ as
\[
n=10, \quad m=3,4, \quad
r = 1, 2, 3, 4, 5
\quad \mbox{ and } \quad
\eps = 10^{-2}, 10^{-4}, 10^{-6}.
\]
For each $(n,m,r,\eps)$, we generate $100$ random instances of $\mF$.
For each instance, apply Algorithm~\ref{alg:tsr:aprox} to compute the approximating
tensors $\mc{X}^{gp}$ and $\mc{X}^{opt}$,
whose symmetric ranks $\leq r$.
For these $100$ instances, we record the
minimum, maximum and quartiles of
%the $25$th smallest, the $50$th smallest,  the $75$th smallest,
{\tt err\text{-}gp}. For {\tt err\text{-}opt},
we record the minimum and maximum,
because the variance is small.
The consumed computational time for each instance
also does not vary much, so we record the average (in seconds).
For the case of approximations in $\mt{S}^3(\cpx^{10})$,
the computational results are reported in Table~\ref{tb:rlerr:cubic}.
For tensors in $\mt{S}^4(\cpx^{10})$, the results
are reported in Table~\ref{tb:err:quar}.
In these two tables, the first column lists the values of $r$;
the second column lists the values of $\eps$;
the columns entitled by {\tt err\text{-}gp}
list the relative error bounds {\tt err\text{-}gp}
for the minimum, maximum and quartiles;
the columns entitled by {\tt err\text{-}opt}
list the relative error bounds {\tt err\text{-}opt}
for the minimum and maximum;
the last column lists the average of computational time
of Algorithm~\ref{alg:tsr:aprox}.
As one can see, for the majority of instances,
$\mc{X}^{gp}$ is a good approximation.
For a few cases, the relative error {\tt err\text{-}gp} is big,
but the absolute error $\| \mF - \mc{X}^{gp} \|$ is still small
(compared with the tensor norm $\| \mF \|$).
For all instances, the improved tensor
$\mc{X}^{opt}$ (by solving the optimization \reff{nLS:lra:symF}
with $\mc{X}^{gp}$ as a starting point)
gives a high quality rank-$r$ approximation.
\end{exm}

A traditional approach for computing low rank tensor approximations
is to solve \reff{nLS:lra:symF} by nonlinear least squares (NLS) methods.
NLS requires a starting point for the approximating tensor.
A typical method for choosing starting points is to flatten
$\mF \in \mt{S}^m(\cpx^n)$ into a cubic tensor
$\widetilde{\mF} \in \mt{S}^{k_1}(\cpx^n) \otimes \mt{S}^{k_2}(\cpx^n) \otimes \cpx^n$,
where $k_1+k_2+1=m$ and $k_1=k_2$ ($m$ is odd) or $k_1=k_2+1$ ($m$ is even).
When $r \leq \binom{n-1+\lfloor(m-1)/2\rfloor}{n-1}$ and $\mF$ is rank-$r$,
the methods in \cite{DDeLa14,LRA93} can be applied to get
a decomposition for $\widetilde{\mF}$.
Low rank tensors often has unique rank decompositions,
so the computed decomposition often corresponds
to the unique symmetric decomposition.
For low rank approximations, this approach can still be applied to
get an approximate Waring decomposition,
which then can be used a starting point for solving \reff{nLS:lra:symF}
by NLS. This is the current state-of-the-art.
However, when $r > \binom{n-1+\lfloor(m-1)/2\rfloor}{n-1}$,
there are no general methods for choosing good starting points.
In computational practice, people often choose random ones.
On the other hand, Algorithm~\ref{alg:tsr:aprox} does not depend
on the choice of starting points. It has good performance even if
the value of $r$ is large. The following
is a computational experiment for this.

\begin{exm} \label{exmp:bigr}
We compare Algorithm~\ref{alg:tsr:aprox}
with the NLS approach for solving \reff{nLS:lra:symF}, i.e.,
only Step~5 of Algorithm~\ref{alg:tsr:aprox} is implemented
with randomly chosen starting points.
As suggested by a referee, we make the comparison
on low rank tensors of the form
\[
\mc{R} =  \tau (u_1)^{\otimes m} + \tau^2 (u_1)^{\otimes m}
+\cdots + \tau^r (u_r)^{\otimes m},
\]
where $u_1, \ldots, u_r$ are random complex vectors
as in Example~\ref{emp:rler:LRsTA} and  $\tau >0$ is a scaling factor.
In the numerical test, we choose
\[
r > \binom{n-1+ \lfloor (m-1)/2 \rfloor}{ n-1}
\quad \mbox{ and } \quad
\tau = 1000^{\frac{1}{r}}.
\]
We choose $\mc{E}$, with $\| \mc{E} \| = \eps$,  in the same way as
in Example~\ref{emp:rler:LRsTA} and let
$\mF = \mc{R} + \mc{E}$.
Let $\mc{X}^{opt}$ be the low rank tensor
produced by Algorithm~\ref{alg:tsr:aprox} and the relative error
{\tt err\text{-}opt} is measured in the same way.
For the NLS method, we apply the MATLAB function {\tt lsqnonlin}
with random starting points.
For a better chance of success,
we apply {\tt lsqnonlin} $10$ times
with different random starting points,
and then select the best low rank approximating tensor that is found,
which we denote as $\mc{X}^{nls}$. Its relative error is similarly measured as
\[
{\tt err\text{-}nls} :=  \| \mF - \mc{X}^{nls} \| \,\, / \,\, \eps.
\]
We test for the values $n=10,m=3,$
\[
r = 11, 12, 13, 14, 15
\quad \mbox{ and } \quad
\eps = 10^{-2}, 10^{-4}, 10^{-6}.
\]
Use {\tt tm-opt} to denote the time consumed by
Algorithm~\ref{alg:tsr:aprox}, 
and use {\tt tm-nls} to denote the
time of applying {\tt lsqnonlin} $10$ times in total.
The ratio {\tt err\text{-}nls}/{\tt err\text{-}opt}
measures the difference of their approximation qualities.
The bigger it is,
the better the approximation quality $\mc{X}^{opt}$ has.
The ratio {\tt tm-nls}/{\tt tm-opt}
measures the difference of their computational time.
The bigger it is, the more expensive the NLS is.
For each $(r,\eps)$ as above, we generate $20$ random instances
of $\mF$, and then compute $\mc{X}^{opt}$, $\mc{X}^{nls}$
respectively by applying Algorithm~\ref{alg:tsr:aprox} and {\tt lsqnonlin}.
We report the {\tt 1st}, {\tt 5th, 10th, 15th} and {\tt 20th}
smallest values of the ratio {\tt err\text{-}nls}/{\tt err\text{-}opt}
(the {\tt 1st} one is the minimum and the {\tt 20th} one is the maximum),
and the minimum, medium and the maximum values of
the ratio {\tt tm-nls}/{\tt tm-opt}.
These ratios are reported in Table~\ref{tb:cubic:scaled}.
The first column lists the values of $r$;
the second column lists the values of $\eps$;
the third through seventh columns
list the values of {\tt err\text{-}nls}/{\tt err\text{-}opt};
the last three columns list the values of
{\tt tm-nls}/{\tt tm-opt}.
As one can see, the advantage of Algorithm~\ref{alg:tsr:aprox}
over NLS is quite clear,
in terms of both the approximation quality and the computational time.
There are a few cases that {\tt err\text{-}nls}/{\tt err\text{-}opt} is small.
This is because the optimization problem \reff{nls:omg:cmmu}
was not solved successfully and $\mc{X}^{gp}$
is not an accurate estimate for the best low rank approximation.
\end{exm}

\bcen
\begin{table}[htb]
\caption{Comparison between Algorithm~\ref{alg:tsr:aprox}
and the NLS for computing low rank tensor approximations in $\mt{S}^3(\cpx^{10})$.}
{\small
\btab{ccrcccccrrr}  \specialrule{.2em}{0em}{0.1em}
\multirow{2}{*}{$r$}   & \multirow{2}{*}{$\eps$}   &
\multicolumn{5}{c}{ \tt err\text{-}nls/err\text{-}opt} &  &
\multicolumn{3}{c}{ \tt tm\text{-}nls/tm\text{-}opt}   \\
\cmidrule{3-7}  \cmidrule{9-11}
  &    & 1st & 5th &  10th & 15th & 20th  && min & med. & max   \\  \specialrule{.1em}{.1em}{0.1em}
\multirow{3}{*}{$11$}
& $10^{-2}$  & 1.0  & 1.0  &  $0.3 \cdot 10^4$ &  $0.5 \cdot 10^4$ &  $1.2 \cdot 10^4$
              & &  3.3 & 16.1  & 27.7 \\  \cmidrule{3-7}  \cmidrule{9-11}
& $10^{-4}$  & 1.0  & 1.0  & $2.2 \cdot 10^5$ &  $4.9 \cdot 10^5$ &  $9.5 \cdot10^5$
              & &  1.7   &  7.3   & 30.1  \\  \cmidrule{3-7}  \cmidrule{9-11}
& $10^{-6}$  & 0.7  & $0.3 \cdot 10^8$  & $0.4\cdot 10^8$ &   $0.7 \cdot 10^8$ &   $1.4 \cdot 10^8$
             & &   3.0  &  11.7  &   31.2 \\  \specialrule{.1em}{.1em}{0.1em}
\multirow{3}{*}{$12$}
& $10^{-2}$  & 1.0  & 1.0 &  $0.4 \cdot 10^4$  &  $0.7 \cdot 10^4$  &  $1.1 \cdot 10^4$
             & &  1.6  & 2.4  &  4.1\\  \cmidrule{3-7}  \cmidrule{9-11}
& $10^{-4}$  & 1.0  &   $0.4 \cdot 10^6$  & $0.5 \cdot 10^6$  &  $0.9 \cdot 10^6$  & $1.0 \cdot 10^6$
             & &  2.3 & 2.5  &  10.7  \\  \cmidrule{3-7}  \cmidrule{9-11}
& $10^{-6}$  & 1.0  & $0.2\cdot 10^8$ &  $0.4 \cdot 10^8$ &  $0.6 \cdot 10^8$  &  $1.3 \cdot 10^8$
             & & 2.1 & 2.9 & 10.6 \\  \specialrule{.1em}{.1em}{0.1em}
\multirow{3}{*}{$13$}
& $10^{-2}$  & 1.0  & 1.3  &  $0.6 \cdot 10^4$  &  $0.7 \cdot 10^4$  &  $1.1 \cdot 10^4$
             & &   1.1 & 1.8 & 4.5  \\  \cmidrule{3-7}  \cmidrule{9-11}
& $10^{-4}$  & 0.8  & $0.4 \cdot 10^6$ &   $0.5 \cdot 10^6$ &  $0.7 \cdot 10^6$  & $1.7 \cdot 10^6$
             & & 1.4  &   1.8  &  2.7 \\  \cmidrule{3-7}  \cmidrule{9-11}
& $10^{-6}$  & 1.0  & $0.3 \cdot 10^8$ &  $0.6 \cdot 10^8$ &  $0.8 \cdot 10^8$ &  $1.2 \cdot 10^8$
             & &  1.5  &  1.8  &   2.5 \\  \specialrule{.1em}{.1em}{0.1em}
\multirow{3}{*}{$14$}
& $10^{-2}$  & 2.1  &  $0.3  \cdot 10^4$  &  $0.7 \cdot 10^4$  &  $1.0 \cdot 10^4$  & $1.6   \cdot 10^4$
& &   1.2 &  1.4 &  2.5 \\  \cmidrule{3-7}  \cmidrule{9-11}
& $10^{-4}$  & 0.5  &  1.6  &  $0.4 \cdot 10^6$  &   $0.8  \cdot 10^6$  &  $1.3 \cdot 10^6$
             & &   0.9  &   1.4 &   1.8\\  \cmidrule{3-7}  \cmidrule{9-11}
& $10^{-6}$  & 0.9  & 4.3  &  $0.3 \cdot 10^8$ &  $0.6  \cdot 10^8$  & $1.2 \cdot 10^8$
             & &  1.2  &   1.3 & 12.8  \\  \specialrule{.1em}{.1em}{0.1em}
\multirow{3}{*}{$15$}
& $10^{-2}$  & 0.7  & 1.4  & 3.0  &   $0.6 \cdot 10^4$  & $1.4 \cdot 10^4$
             & &   0.8  & 1.1  &  1.6 \\  \cmidrule{3-7}  \cmidrule{9-11}
& $10^{-4}$  & 0.4  & 1.6 &  $0.4 \cdot 10^6$  &  $0.8 \cdot 10^6$  & $1.7  \cdot 10^6$
             & & 0.8  &  1.0  &    1.6 \\  \cmidrule{3-7}  \cmidrule{9-11}
& $10^{-6}$  & 0.6  & 3.6  &  $0.5  \cdot 10^8$  &  $1.1  \cdot 10^8$ &  $2.0 \cdot 10^8$
             & & 1.0  & 1.1  &  1.7 \\
\specialrule{.2em}{0em}{0.1em}
\etab
}
\label{tb:cubic:scaled}
\end{table}
\ecen

\subsection{Waring decompositions}

As mentioned in Corollary~\ref{cor:apx=>TD},
Algorithm~\ref{alg:tsr:aprox} can also be applied
to compute Waring decompositions,
when the tensor $\mF$ has rank $r$, under suitable conditions.
For generic tensors of certain ranks,
the Waring decomposition is unique,
as shown in \cite{ChOtVan15,GalMel}.
When it is unique, the Waring decomposition
can be computed by Algorithm~\ref{alg:tsr:aprox}.
When it is not unique, Algorithm~\ref{alg:tsr:aprox}
can be applied to get distinct Waring decompositions,
if different starting points are used
for solving \reff{nls:omg:cmmu} in the Step~1.
The following is such an example.

\begin{exm}
Consider the tensor $\mF \in \mt{S}^4(\cpx^4)$ that is given as
\[
\text{ \tiny
$\left[\baray{r}     1  \\   1  \\   1  \\   1  \earay \right]^{\otimes 4} +
\left[\baray{r}     1  \\   1  \\   2  \\  -3  \earay \right]^{\otimes 4} +
\left[\baray{r}     1  \\   2  \\  -3  \\   1  \earay \right]^{\otimes 4} +
\left[\baray{r}     1  \\  -3  \\   2  \\   1  \earay \right]^{\otimes 4} +
\left[\baray{r}     1  \\  -1  \\   3   \\  2  \earay \right]^{\otimes 4} +
\left[\baray{r}     1  \\   2  \\  -1   \\  3  \earay \right]^{\otimes 4} +
\left[\baray{r}     1  \\   3  \\  -1   \\  2  \earay \right]^{\otimes 4} +
\left[\baray{r}     1  \\   1  \\   2   \\  3  \earay \right]^{\otimes 4}.
$
}
\]
Clearly, its rank is at most $8$.
Indeed, the rank equals $8$, because $\rank\, \mbox{Cat}(\mF) = 8$,
which is a lower bound for $\rank_S(\mF)$ by \reff{rk:Cat<=B<=S}.
We apply Algorithm~\ref{alg:tsr:aprox}
with $r=8$ to compute its Waring decomposition.
In the Step~1, the solution to the least squares problem~\reff{ls:Aw=b}
is not unique, so the optimization problem \reff{nls:omg:cmmu}
needs to be solved. For this tensor, the optimizer $G^{ls}$
of \reff{nls:omg:cmmu} is not unique.
In the Step~1 of Algorithm~\ref{alg:tsr:aprox},
when random starting points are used, we can get two optimizers,
which gives two distinct Waring decompositions.
In addition to the one given above, we get the second
decomposition $\sum_{i=1}^8 u_i^{\otimes 4}$
where the vectors $u_i$ are:
{\tiny
\begin{verbatim}
(0.9702   -3.0822    2.1701    1.0051),
(1.0296    2.2812   -3.0472    1.1889),
(0.8181    0.4112    2.3661    2.6623),
(1.1530   -0.4734    2.7425    2.1675),
(1.2091    2.9379   -0.4232    2.6333),
(0.7692    1.4901   -0.8059    2.9758),
(0.6704   -0.4632   -0.5726    0.4538),
(1.0208    1.0288    2.0039   -2.9538).
\end{verbatim} \noindent}Chiantini~et al.~\cite{ChOtVan15}
showed that a generic tensor of rank $8$ in $\mt{S}^4(\cpx^4)$
has two distinct Waring decompositions.
The above computation confirmed this fact.
%%%%%%%%%%%%%%%%%%%%%%%%%%%%%
\iffalse

clear all, clc,
m = 4; n = 4; r = 8;
U = [...
1   1   1   1   1   1   1   1 ; ...
1   1   2  -3  -1   2   3   1 ; ...
1   2  -3   2   3  -1  -1   2 ; ...
1  -3   1   1   2   3   2   3 ; ...
];
calA = hmg_pwlist(n,m);
vF = eval_monvec(calA, U(:,1) );
for s = 2 : r
   vF = vF + eval_monvec(calA, U(:,s) );
end
F = vec2symtsr(vF,n,m);
%
startime = tic;
repeat = 0;
while (repeat < 5 )
repeat = repeat + 1;
[Vgr, err_gr, Xopt, opt_err] = SymLRTAGP(F,r);
U,
%Vgr*diag(1./Vgr(1,:) ),
%real( Xopt*diag(1./Xopt(1,:) )    ),
real( Xopt.' ),
opt_err,
end

CM = symtsr2CatalMat(F);
rank(CM),

\fi
%%%%%%%%%%%%%%%%%%%%%%%%%%%%
\end{exm}

In the following, we report more numerical experiments
for using Algorithm~\ref{alg:tsr:aprox} to compute Waring decompositions.

\begin{exm} \label{sym:lwrk:STD}
%(Decompositions of low rank symmetric tensors.)
%By Corollary~\ref{cor:apx=>TD}, if $\rank_S(\mF) =r$, then $\mc{X}^{gp}$
%produced by Algorithm~\ref{alg:tsr:aprox} gives a rank decomposition for $\mF$.
%We report numerical experiments for decomposing random low rank tensors.
As in Example~\ref{emp:rler:LRsTA},
we generate $\mF$ in the same way, except letting $\mc{E}=0$.
For each instance of $\mF$, we apply
Algorithm~\ref{alg:tsr:aprox} to get $\mc{X}^{gp}$.
Because of round-off errors and numerical issues, $\mc{X}^{gp}$ may not give an
exact decomposition for $\mF$. However, the improved tensor
$\mc{X}^{opt}$ usually gives a more accurate decomposition.
Generally, $\mc{X}^{opt}$ can be computed easily,
because $\mc{X}^{gp}$ is already very accurate.
For each $(n,m,r)$ in Table~\ref{tab:time:lwrkSTD},
we generate $100$ random instances of $\mF$, except for
\[
(n,m,r) \in \{ (40,3,30), \, (25,4,25), \, (30, 4, 30) \}.
\]
(For the above $3$ cases, only $10$ random instance were generated,
because of the relatively long computational time.)
For each $(n,m,r)$ with the symbol $\ast$, it means that
the least squares problem \reff{ls:Aw=b} has a parameter $\omega$
in its optimal solution, and the nonlinear optimization problem
\reff{nls:omg:cmmu} needs to be solved.
For all the instances, we get correct rank decompositions (up to tiny round-off errors).
%
%Since $\mc{X}^{gp} = \mF$, Step~5 of Algorithm~\ref{alg:tsr:aprox}
%does not need to be performed.
%
For each $(n,m,r)$, the average of computational time (in seconds)
is reported in Table~\ref{tab:time:lwrkSTD}. As we can see,
Waring decompositions of low rank symmetric tensors
can be computed efficiently by Algorithm~\ref{alg:tsr:aprox}.
%
%Algorithm~\ref{alg:tsr:aprox} only needs to solve some linear least squares
%and Schur's decompositions. So, it is suitable
%for computing low rank symmetric tensor decompositions,
%especially for large tensors.
%
\end{exm}

\bcen
\begin{table}[htb]
\caption{Computational time (in seconds) for
computing decompositions of rank-$r$ symmetric tensors in $\mt{S}^m(\cpx^n)$.}
\btab{lrclrclr}  \specialrule{.2em}{0em}{0.1em}
$(n,m,r)$  & {\tt time} & \quad & $(n,m,r)$ & {\tt time} & \quad & $(n,m,r)$ & {\tt time} \\
 \cmidrule{1-2}  \cmidrule{4-5}  \cmidrule{7-8}
(4, 3, 5)$\ast$    &  0.42    && (3, 4, 5)$\ast$  & 0.08  &&  (4, 5, 10)  &   0.24   \\
 \cmidrule{1-2}  \cmidrule{4-5}  \cmidrule{7-8}
(8, 3, 10)$\ast$   & 12.73    & &  (5, 4, 10)$\ast$  & 0.90  &&  (6, 5, 20)  &   2.46   \\
 \cmidrule{1-2}  \cmidrule{4-5}  \cmidrule{7-8}
(14, 3, 15)$\ast$   & 74.23    & & (8, 4, 15)$\ast$  &  6.47 &&  (7, 5, 30)$\ast$  &   76.50   \\
 \cmidrule{1-2}  \cmidrule{4-5}  \cmidrule{7-8}
(20, 3, 20)  &    16.67  && (10, 4, 20)$\ast$  &   74.69  &&  (4, 6, 10)  &  0.97 \\
 \cmidrule{1-2}  \cmidrule{4-5}  \cmidrule{7-8}
(30, 3, 25)  &    95.01  &&  (25, 4, 25)  & 528.18   &&    (5, 6, 20)  &  4.18   \\
 \cmidrule{1-2}  \cmidrule{4-5}  \cmidrule{7-8}
(40, 3, 30)  &   410.74  &&  (30, 4, 30)  & 1483.65  && (6, 6, 30)$\ast$  &   25.08  \\
\specialrule{.2em}{.1em}{0em}
\etab
\label{tab:time:lwrkSTD}
\end{table}
\ecen

\section{Conclusions and Future Work}
\label{sc:confu}

This paper studies the low rank approximation problem
for symmetric tensors. The main approach is to use generating polynomials.
The method is described in Algorithm~\ref{alg:tsr:aprox}.
We showed that if a symmetric tensor is sufficiently close to
a low rank one, the low rank approximating tensors produced by
Algorithm~\ref{alg:tsr:aprox} are quasi-optimal.
Moreover, Algorithm~\ref{alg:tsr:aprox} can also be applied
to compute Waring decompositions. Numerical experiments
for the computation are also given.

There is still much future work to do for computing low rank
symmetric tensor approximations.
When is Algorithm~\ref{alg:tsr:aprox} able to produce
best low rank approximations? If it does, how can we detect that
the computed low rank approximation is the best? Mathematically,
the best low rank approximation might not exist \cite{DeSLim08}.
In such a case, how can we get a low rank approximation
that is close to being best?
If a symmetric tensor is not close to a low rank one,
Algorithm~\ref{alg:tsr:aprox} can be still be
applied to get a low rank approximation,
but its quality cannot be guaranteed.
For such a case, how can get better low rank approximations?
To the best of the author's knowledge,
these questions are mostly open.
They are interesting questions for future work.

\bigskip
\noindent
{\bf Acknowledgement}
The research was partially supported by the NSF grants
DMS-1417985 and DMS-1619973.
The author would like to thank the anonymous referees
for fruitful suggestions on improving the paper.

\end{document}